\let\SavedRightarrow=\Rightarrow
\let\Rightarrow=\SavedRightarrow
\newenvironment{itemizz}{\begin{itemize}\setlength{\itemsep}{-1mm}}
{\end{itemize}}
\newenvironment{itemizn}[1] %   argument is dingbat number
{\begin{itemize} \setlength{\itemsep}{-1mm} %
} %
{\end{itemize}}
\newtheorem{theorem}{Theorem}[section]
\newtheorem{definition}[theorem]{Definition}
\newtheorem{lemma}[theorem]{Lemma}
\newcommand\NNN{\mathbb {N}}
\newcommand\RRR{\mathbb {R}}
\newcommand\PPP{\mathbb {P}}
\newcommand\BB{\mathcal {B}}
\newcommand\JJ{\mathcal {J}}
\newcommand\TT{\mathcal {T}}
\newcommand\dom{\mathrm{dom}}  % domain
\newcommand\cl{\mathrm{cl}}  % closure
\newcommand\lh{\mathrm{lh}}   % range
\newcommand\MA{\mathrm{MA}}  % MA
\newcommand\PFA{\mathrm{PFA}}  % PFA
\newcommand\SOCA{\mathrm{SOCA}}  % SOCA
\newcommand\ZFC{\mathrm{ZFC}}  % ZFC
\newcommand\OCAARS{\mathrm{OCA}_{\mathrm{[ARS]}}}  % OCA
\newcommand\CH{\mathrm{CH}}  % CH
\newcommand\diam{\mathrm{diam}}  % diameter
\newcommand\hgt{\mathrm{ht}}   % height
\newcommand\res{\mathord {\upharpoonright}}  % less space around it
\newcommand\cat{^{\mathord{\frown}}}  % for string concatenation
\newcommand\onto{\twoheadrightarrow}
\newcommand\iv{^{-1}} % inverse
\newcommand\one{\mathbbm{1}} % 1, for forcing orders
\newcommand\eop{{\Large \Coffeecup}}  
\newenvironment{proof}{{\bf Proof.}}{\eop\medskip}
\newenvironment{proofof}[1]{\medskip \textbf{Proof of #1.}}{\eop\medskip}
\begin{document}

\title{Arcs in the Plane%
\footnote{
2000 Mathematics Subject Classification:
Primary  03E50, 03E65, 53A04.
Key Words and Phrases: 
arc, smooth, PFA, MA.
}}

\author{Joan E. Hart\footnote{University of Wisconsin, Oshkosh,
WI 54901, U.S.A.,
\ \ hartj@uwosh.edu}
\  and
Kenneth Kunen\footnote{University of Wisconsin,  Madison, WI  53706, U.S.A.,
\ \ kunen@math.wisc.edu}
\thanks{Both authors partially supported by NSF Grant 
DMS-0456653.}
}

\maketitle

\begin{abstract}
Assuming PFA, every uncountable subset $E$ of the plane meets
some $C^1$ arc in an uncountable set.
This is not provable from $\MA(\aleph_1)$, although in the case
that $E$ is analytic, this is a ZFC result.
The result is false in ZFC for $C^2$ arcs,
and the counter-example is a perfect set.
\end{abstract}

\section{Introduction}
\label{sec-intro}
As usual, an \emph{arc} in $\RRR^n$ is a set homeomorphic to a closed
bounded subinterval of  $\RRR$.
A (simple) \emph{path} is a homeomorphism $g$ mapping a compact
interval onto $A$.
For $k \ge 1$,
a path is $C^k$ iff it is a $C^k$ function, and
an arc $A$ is $C^k$  iff 
$A$ is the image of some $C^k$ path $g$, with
$g'(t) \ne 0$ for all $t$; 
equivalently, $A$ has a $C^k$ arc length parameterization.
Also, $A$ is $C^{\infty}$ iff it is $C^k$ for all $k$.
We consider the following:

\smallskip

\textit{\textbf{Question.}} For $n \ge 2$, 
if $E \subseteq \RRR^n$ is uncountable,
must there be a ``nice'' arc $A$ such that $E \cap A$ is uncountable?

\smallskip

Obviously, the answer will depend on the definition of ``nice''.
We should expect
ZFC results for closed $E$ (equivalently, for analytic $E$),
and independence results for arbitrary $E$.
In general, under $\CH$ things are as bad as possible, and
under $\PFA$, things are as good as possible.
In most cases, the results are the same for all $n \ge 2$, and trivial
for $n = 1$.

For arbitrary arcs, the results are quite old.
In ZFC, every closed uncountable set meets some
arc in an uncountable set.
For $n \ge 2$, arcs are nowhere dense in $\RRR^n$; so 
under $\CH$ there is a Luzin set that
meets every arc in a countable set. 
At the other extreme,
under $\MA(\aleph_1)$, every uncountable $E \subseteq \RRR^n$
meets some arc in an uncountable set.  

If ``nice'' means ``straight line'', then there is a trivial
counter-example: a perfect set $E$ which meets every 
line in at most two points.

Paper \cite{Ku} introduces results where ``nice'' means
``almost straight'':

\begin{definition}
\label{def-directed}
Let $\rho : \RRR^n \backslash \{0\} \onto S^{n-1}$
be the perpendicular retraction given by $\rho(x) = x / \|x\|$. 
Then $A \subseteq \RRR^n$ is $\varepsilon$--\emph{directed}
iff for some $v \in S^{n-1}$,
$\| \rho(x-y) - v \| \le \varepsilon$ or
$\| \rho(x-y) + v \| \le \varepsilon$
whenever $x,y$ are distinct points of $A$.
\end{definition}

The retraction $\rho(x-y)$ may be viewed as the \emph{direction}
from $y$ to $x$.
Every $A\subseteq \RRR^n$ is trivially $\sqrt2$--directed, 
and $A$ is $0$--directed iff $A$ is contained in a straight line.
If ``nice'' means ``$\varepsilon$--directed'',
a counter-example to the Question
is consistent with $\MA(\aleph_1)$. By 
\cite{Ku}, 
the existence of a \emph{weakly} Luzin set is consistent 
with $\MA(\aleph_1)$, and 
whenever $\varepsilon < \sqrt2$,
a {weakly} Luzin set (see \cite{Ku} Definition 2.4)  
meets every $\varepsilon$--directed set in a countable set.
However, under SOCA, which follows from PFA,
whenever $\varepsilon > 0$, every 
uncountable set meets some  $\varepsilon$--directed arc in an uncountable set
(see Lemma \ref{lemma-dir}).
Every $C^1$ arc is a finite union of $\varepsilon$--directed arcs,
and hence we get the stronger:

\begin{theorem}
\label{thm-c1-pfa}
PFA implies that every uncountable subset of $\RRR^n$
meets some $C^1$  arc in an uncountable set.
\end{theorem}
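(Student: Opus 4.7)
The plan is to derive the theorem from Lemma \ref{lemma-dir} by leveraging the decomposition remark that every $C^1$ arc is a finite union of $\varepsilon$-directed arcs. Since that lemma, holding under \SOCA{} and hence under \PFA, yields for each $\varepsilon > 0$ an $\varepsilon$-directed arc meeting $E$ in an uncountable set, the remaining task is to upgrade ``$\varepsilon$-directed'' to ``$C^1$''.

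Concretely, I would fix $\varepsilon > 0$ small and apply Lemma \ref{lemma-dir} to obtain an $\varepsilon$-directed arc $A$ with $F := E \cap A$ uncountable. Let $v \in S^{n-1}$ witness the directedness. For $\varepsilon$ small enough, orthogonal projection of $A$ onto $\RRR v$ is injective, and $A$ is realized as the graph of a Lipschitz function of small Lipschitz constant. The next step is to inspect the construction inside the proof of Lemma \ref{lemma-dir}: if $A$ is built as a concatenation of smooth subpieces each with direction within $\varepsilon$ of $v$, then because adjacent tangent directions differ by at most $O(\varepsilon)$, the joins may be arranged with matching tangent vectors, and the arc can be smoothed into a $C^1$ curve. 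This would realize $A$ itself as a $C^1$ arc and complete the proof.

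The main obstacle is that the arc produced by Lemma \ref{lemma-dir} need not come pre-smoothed, so the $C^1$ upgrade step must be verified carefully. If the direct route fails, my fallback is to extract, inside the Lipschitz graph $F$, an uncountable subset lying on a genuine $C^1$ arc. I would attempt this via a Whitney-extension approach: assign to each $x \in F$ a candidate tangent slope, then use a further \SOCA{} or open-coloring argument to extract an uncountable $F' \subseteq F$ on which the first-order Taylor remainders vanish uniformly; Whitney's extension theorem then supplies the desired $C^1$ curve. The substantive difficulty lies in enforcing the uniform Taylor estimate while preserving uncountability under \SOCA{}, and this is where I expect the real work of the argument to live.
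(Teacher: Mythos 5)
Your first route rests on a misreading of Lemma \ref{lemma-dir}: that lemma (via \SOCA) produces an uncountable $\varepsilon$--\emph{directed subset} $T$ of $E$, not an $\varepsilon$--directed \emph{arc} built from smooth subpieces. There is no ``construction inside the proof'' to inspect --- the proof is a one-line application of the open-coloring Lemma \ref{lemma-soca-cover} --- so there are no joins to smooth. What you get after rotating is only that $T$ is the graph of a Lipschitz function over a set $D$ on the $x$-axis; the sentence in the introduction (``every $C^1$ arc is a finite union of $\varepsilon$--directed arcs'') runs in the opposite direction, explaining why the $C^1$ theorem is \emph{stronger} than the $\varepsilon$--directed statement, not how to derive it.

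Your fallback is the right skeleton --- assign slopes, force a uniform first-order Taylor estimate on an uncountable subset, then extend --- and it is essentially what the paper does (with the Hermite spline Lemma \ref{lemma-interp} playing the role of Whitney extension: the ``strong derivative'' condition is exactly the $C^1$ Whitney condition). But the step you defer is the entire content of the theorem, and it cannot be done with \SOCA{} as you propose. You need, for \emph{every} scale $2^{-m}$, that the difference quotients $g^i(x_1,x_2)$ oscillate by at most $2^{-m}$ on each piece of some decomposition; iterating \SOCA{} over $m$ gives a decreasing $\omega$-sequence of uncountable sets whose intersection may be empty. The paper instead uses $\OCAARS$ (Theorem \ref{thm-oca-ars}) to partition $E$, for each $m$, into countably many homogeneous pieces, and then a ccc forcing under $\MA(\aleph_1)$ (Lemma \ref{lemma-polish-cont}) to produce a single Cantor set $Q$ with $|Q\cap E|=\aleph_1$ realizing all these partitions simultaneously as relatively clopen partitions of $Q$; this yields the continuous extension of the difference quotients to the diagonal (Lemma \ref{lemma-oca-cont}), i.e.\ the strong derivative needed for the spline. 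That combinatorial core is absent from your proposal.
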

$\MA(\aleph_1)$ is not sufficient for this theorem,
because, as in the $\varepsilon$-directed case ($\varepsilon <\sqrt{2})$,
a weakly Luzin set provides a counter-example.
Theorem \ref{thm-c1-pfa} and 
the following ZFC theorem for closed sets 
are proved in Section \ref{sec-posit}.

\begin{theorem}
\label{thm-c1}
If $P \subseteq \RRR^n$ is closed and uncountable,
then there is a $C^1$ arc $A$ with a Cantor set $Q \subseteq P \cap A$.
Hence, for every $\varepsilon > 0$, $P$ meets 
some $\varepsilon$--directed arc in an uncountable set.
\end{theorem}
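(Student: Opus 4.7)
First, I reduce to the case that $P$ is perfect (by taking its Cantor--Bendixson kernel), and I observe that the second sentence of the theorem follows from the first: once $A$ is a $C^1$ arc with $P \cap A$ containing a Cantor set $Q$, continuity of the unit tangent field along $A$ makes any sufficiently short subarc of $A$ meeting $Q$ uncountably be $\varepsilon$--directed. So it suffices to produce $A$ and $Q$.

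The main construction is a fusion producing a Cantor scheme $\{P_s : s \in 2^{<\omega}\}$ of nonempty perfect subsets of $P$ together with unit vectors $v_s \in S^{n-1}$, for a preselected sequence $\varepsilon_n \downarrow 0$ decaying rapidly, such that: (i) $P_{s\cat 0}$ and $P_{s\cat 1}$ are disjoint subsets of $P_s$; (ii) $\diam(P_s) \to 0$, with the next level's diameter small relative to $\varepsilon_{|s|}$ times the current one; (iii) whenever $s \subseteq t$, $x \in P_{t\cat 0}$, and $y \in P_{t\cat 1}$, $\|\rho(y-x) - v_s\| \le \varepsilon_{|s|}$, with signs of $v_s$ chosen consistently along branches so that $v_s$ points from the $0$--subtree toward the $1$--subtree; (iv) $\|v_{s\cat i} - v_s\| \le \varepsilon_{|s|}$. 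The recursive step uses only that $P_s$ is perfect: pick two close distinct points of $P_s$, let $v_s$ be their signed direction (automatically within tolerance of the parent $v_{s'}$ by (iii) at the previous level, which secures (iv)), take small disjoint perfect subsets of $P_s$ clustered around each of the two points, and shrink the next diameter to beat $\varepsilon_{|s|+1}$.

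Setting $Q = \bigcap_k \bigcup_{s \in 2^k} P_s$, the lexicographic order on branches linearly orders $Q$ as a Cantor set, and $\tau(x) = \lim_k v_{x \res k}$ defines a continuous unit tangent field on $Q$ by (iv). Conditions (ii) and (iii) together yield Whitney's $C^1$ condition for the pair consisting of the inclusion $Q \hookrightarrow \RRR^n$ and the field $\tau$, after reparameterizing $Q$ by a signed length coordinate $\lambda$ built from the branch structure. Whitney's extension theorem then produces a $C^1$ function $g$ on a closed interval with $g \supseteq \mathrm{id}|_Q$ (under the $\lambda$-coordinate) and $g'|_Q = \tau$; since $\tau$ is unit-valued, $g' \ne 0$ on a neighborhood of the preimage of $Q$, and the cross-piece separation from (iii) upgrades the local injectivity given by the nonvanishing derivative to global injectivity. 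The image of $g$, restricted suitably, is the desired $C^1$ arc $A \supseteq Q$.

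The main obstacle is the calibration in the Whitney step: the linear approximation $y - x \approx \lambda(x,y)\,\tau(x)$ must have error $o(\|y-x\|)$ \emph{uniformly} over all pairs $x,y \in Q$, not just pairs at a single scale. This forces a careful coupling of $\varepsilon_n$ with the diameter decay in (ii) so that when $x$ and $y$ split at a shallow common ancestor $s$, the cross-piece direction bound (iii) at level $|s|$ beats the intra-piece diameter at level $|s|+1$; this is what dictates the rapid-decay schedule for $\varepsilon_n$ and is the real content of the bookkeeping behind the construction.
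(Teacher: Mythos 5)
Your architecture is genuinely different from the paper's and is essentially viable. The paper first shrinks $P$ to a Cantor set that is $2\sin(22.5^\circ)$--directed (Lemma \ref{lemma-dir}, which for Polish $E$ is a ZFC partition result via Lemma \ref{lemma-polish-soca}), rotates so that $P$ becomes the graph of a function on a Cantor set $D\subset\RRR$ with all slopes in $[-1,1]$, then runs a \emph{second} fusion (Lemma \ref{lemma-cont-soca}) to make the difference-quotient function extend continuously to the diagonal --- i.e.\ to make the derivative exist ``in the strong sense'' of Section \ref{sec-hermite} --- and finally extends coordinatewise by an explicit cubic Hermite spline (Lemmas \ref{lemma-interp-cubic} and \ref{lemma-interp}). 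You compress the two fusions into one: your direction vectors $v_s$ simultaneously deliver $\varepsilon$--directedness and the strong derivative, since $\tau$ is precisely the continuous diagonal extension of the direction function. And you replace the explicit spline by Whitney's extension theorem. What the paper's route buys is that after the rotation everything is scalar and the Whitney-type condition is literally the displayed ``strong sense'' condition, checked coordinatewise; what your route buys is avoiding the graph reduction, at the cost of manufacturing the parameter $\lambda$ yourself and verifying $|\mu-\lambda|/\|y-x\|\to 1$ uniformly, which you correctly identify as the calibration burden.

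Two points need repair. First, if $\lambda$ is the linear projection onto $v_\emptyset$ (the simplest workable choice), the derivative satisfying Whitney's condition is $\tau(x)/\langle\tau(x),v_\emptyset\rangle$ rather than the unit vector $\tau(x)$; this is harmless (still nonvanishing and parallel to $\tau$), but your assertion that $g'|_Q=\tau$ with $\tau$ unit-valued holds only if $\lambda$ is a genuine arclength-type coordinate, which requires its own additivity estimate. Second, and more substantively, your injectivity argument controls $g'$ only ``on a neighborhood of the preimage of $Q$,'' whereas $g$ is defined on the whole convex hull of $\lambda(Q)$, and Whitney's theorem gives no control of $g'$ deep inside the complementary gaps: there $g$ could stall or double back and self-intersect. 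This is exactly what the quantitative ``Moreover'' clauses of Lemma \ref{lemma-interp-cubic} are for: the explicit cubic keeps $g'$ within $3M$ of the chord slope on each gap, so $\langle g'(t),v_\emptyset\rangle>0$ everywhere and injectivity follows from monotonicity of the projection. Replace the black-box extension on the gaps by such an interpolant (or treat the finitely many long gaps separately) and your proof closes.
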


If the Question asks for a $C^2$ arc, 
then a ZFC counter-example exists
in the plane, and hence in any $\RRR^n$ ($n \ge 2$).
The counter-example, given in Theorem \ref{thm-c2}, is
a \textit{non-squiggly} subset of the plane.
A simple example of a non-squiggly set is a $C^1$ arc whose tangent vector
either always rotates clockwise or always rotates counter-clockwise.
In particular,  such an arc may be
the graph of a convex function $f \in C^1([0,1],\RRR)$; 
a real differentiable function is \textit{convex} iff its derivative
is a monotonically increasing function.
But non-squiggly makes sense for non-smooth arcs,
and in fact for arbitrary subsets of the plane:

\begin{definition}
\label{def-squig}
$A \subseteq \RRR^2$ is \emph{non-squiggly} iff
there is a $\delta$, with $0<\delta \le \infty$, such that  whenever
$\{x,y,z,t\} \in [A]^4$ and
$diam(\{x,y,z,t\}) \le \delta$, point $t$ 
is not interior to triangle $xyz$.
\end{definition}

\begin{theorem}
\label{thm-c2}
There is a perfect non-squiggly set $P \subseteq \RRR^2$
which lies in a $C^1$ arc $A$
and which meets each $C^2$ arc in a finite set.
Moreover, the $C^1$ arc $A$ may be taken to be
the graph of a convex function.
\end{theorem}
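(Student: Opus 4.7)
My plan is to take $A$ to be the graph of a strictly convex $f\in C^1([0,1])$ and set $P=\{(x,f(x)):x\in P_x\}$ for a Cantor set $P_x\subseteq[0,1]$. Then $A$ is automatically a $C^1$ arc equal to the graph of a convex function, and $P\subseteq A$ is perfect. Non-squiggliness of $P$ is automatic with $\delta=\infty$: for four points $p_i=(x_i,f(x_i))$ of $A$ with $x_1<x_2<x_3<x_4$, strict convexity places $p_2$ strictly below the chord $p_1p_3$; monotonicity of chord slopes from $p_1$ puts $p_1p_3$ strictly below $p_1p_4$ on $(x_1,x_4)$; and at $x=x_2$ the triangle $p_1p_3p_4$ is exactly the vertical segment between these two chords, so $p_2$ lies strictly below it. The analogous statements for $p_3$ (via chords $p_1p_4$ and $p_2p_4$) and for the extremal $p_1,p_4$ are symmetric or trivial.

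Next I reduce ``$P$ meets every $C^2$ arc in a finite set'' to an analytic condition on $f$. Suppose some $C^2$ arc $A'$ meets $P$ in an infinite set; pick an accumulation point $p_0=(x_0,f(x_0))\in P$ of $P\cap A'$ and a sequence $(x_i,f(x_i))\to p_0$ in $P\cap A'$ with $x_i\ne x_0$. The secant from $p_0$ to $(x_i,f(x_i))$ has direction $\bigl(1,(f(x_i)-f(x_0))/(x_i-x_0)\bigr)\to(1,f'(x_0))$, so the unique tangent of $A'$ at $p_0$ is $(1,f'(x_0))$, not vertical. The inverse function theorem then represents $A'$ locally as a graph $\{(x,h(x))\}$ of $h\in C^2$ with $h(x_0)=f(x_0)$, $h'(x_0)=f'(x_0)$ and $h(x_i)=f(x_i)$ for large $i$; Taylor expansion of $h$ yields
\[
R(x_0,x_i)\;:=\;\frac{f(x_i)-f(x_0)-f'(x_0)(x_i-x_0)}{(x_i-x_0)^2}\;\longrightarrow\;\tfrac12\, h''(x_0),
\]
a finite limit. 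So it is enough to build $(f,P_x)$ with the property $(**)$: for every $x_0\in P_x$, $R(x_0,x)\to\infty$ as $x\to x_0$ through $P_x\setminus\{x_0\}$.

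For $f$ I would set
\[
f(x)\;:=\;\tfrac12 x^2+\int_0^x\phi(t)\,dt,
\]
where $\phi:[0,1]\to[0,1]$ is a continuous non-decreasing Cantor-staircase-like function with $\phi(0)=0$, $\phi(1)=1$, constant on every complementary gap of $P_x$, and H\"older of some exponent $\alpha<1$ with a matching pointwise \emph{lower} bound $\phi(t)-\phi(x_0)\ge c(t-x_0)^\alpha$ at every $x_0\in P_x$. Then $f'=x+\phi$ is continuous and strictly increasing (strictness coming from the $x$-summand), so $f$ is $C^1$ and strictly convex; and the identity
\[
R(x_0,x)\;=\;\tfrac12+\frac{\int_{x_0}^{x}(\phi(t)-\phi(x_0))\,dt}{(x-x_0)^2}
\]
combined with the H\"older lower bound gives $R(x_0,x)\ge\tfrac12+\tfrac{c}{\alpha+1}(x-x_0)^{\alpha-1}\to\infty$ as $x\to x_0^+$ through $P_x$, with a symmetric estimate on the left, establishing $(**)$.

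The main obstacle I expect is securing the uniform pointwise H\"older \emph{lower} bound on $\phi$ at every $x_0\in P_x$. For the classical middle-thirds Cantor staircase the lower bound holds cleanly at the self-similar scales $t-x_0=3^{-n}$ but degenerates for pairs $(x_0,t)$ in which $x_0$ is close to a flat plateau of the staircase, since then $\phi(t)-\phi(x_0)$ can be arbitrarily small for some $t\in P_x$ with $t-x_0$ of a given size. The fix is either to choose $P_x$ with smaller gaps than middle-thirds (making the self-similar scaling factor large enough to dominate the staircase plateaus) or to construct $\phi$ via an iterated function system whose local H\"older exponent is controlled uniformly, and then to verify the lower bound by induction on Cantor scales. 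This uniform control---over \emph{all} $x_0\in P_x$ and \emph{all} approach sequences---is the technical heart of the construction.
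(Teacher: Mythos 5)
Your reduction is exactly the paper's: the paper also realizes $P$ inside the graph of a convex $C^1$ function $\psi$, observes that at an accumulation point of $P$ with a $C^2$ arc the tangent cannot be vertical, rewrites the arc locally as a $C^2$ graph $\xi$, and uses Taylor's theorem to show that the second-order difference quotient $\big[(\psi(x_0+t)-\psi(x_0))/t-\psi'(x_0)\big]/t$ would have a finite limit, contradicting a blow-up statement identical to your $(**)$. Your non-squiggliness check is fine (the paper treats subsets of graphs of convex functions as obviously non-squiggly). The gap is where you say it is, but it is worse than a delicate verification: for a non-decreasing $\phi$ that is \emph{constant on every complementary gap} of $P_x$, the pointwise lower bound $\phi(t)-\phi(x_0)\ge c\,(t-x_0)^\alpha$ for $x_0,t\in P_x$ is not hard to check --- it is false for \emph{every} such $\phi$ and every Cantor set $P_x$. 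If $(a,b)$ is a gap, then $\phi(b)=\phi(a)$, so taking $x_0\in P_x$ slightly to the left of $a$ and $t\in P_x$ slightly to the right of $b$ makes $\phi(t)-\phi(x_0)$ arbitrarily small while $t-x_0\ge b-a$ stays bounded below; such configurations recur at every scale no matter how you tune the gap/bridge ratios or the IFS, so neither of your proposed fixes removes them. What you actually need is the asymptotic statement for each fixed $x_0$ as $t\to x_0$ through $P_x$, and for staircase-type $\phi$ even that fails at suitably chosen points $x_0$ (those approximated too well by gap endpoints), so the construction as proposed does not close.

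The paper escapes this by \emph{not} making $\psi'$ flat on the gaps. It takes (Lemma \ref{lemma-cantor-zero}) a strictly increasing $f\in C^\infty(\RRR)$ with $f'\ge 0$ and $\{f'=0\}=D$ a Cantor set, puts $\varphi=f^{-1}$, $K=f(D)$, and defines $\psi$ by $\psi'=\varphi$. Then $\varphi$ is continuous and strictly increasing, so $\psi$ is convex and $C^1$; and for each $x\in K$ and each $M$ there is $\varepsilon>0$ with $\varphi'(u)\ge M$ for \emph{all} $u$ with $|u-x|<\varepsilon$. This yields the local linear lower bound $\varphi(v)-\varphi(a)\ge M(v-a)$ on that neighborhood with an arbitrarily large constant --- much stronger than any H\"older estimate --- and two integrations give the blow-up at every point of $K$ and for every approach $t\to 0$, not just through the Cantor set. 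In short: the correct realization of your $\phi$ lets it increase on the gaps with derivative tending to $+\infty$ at the Cantor set; insisting on a devil's-staircase $\phi$ is precisely what creates the plateau obstruction. Replacing your staircase by $f^{-1}$ as above (the $\tfrac12x^2$ term then being unnecessary) turns your outline into the paper's proof.
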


As ``nice'' notions, non-squiggly is orthogonal to smooth:

\begin{theorem}
\label{thm-squig}
There is a perfect set $P \subseteq \RRR^2$ which lies in a 
$C^\infty$ arc and which meets every non-squiggly set in a countable set.
\end{theorem}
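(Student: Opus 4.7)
Plan.
Take $P = \{(x, f(x)) : x \in P'\}$, a perfect subset of the $C^\infty$ arc $A = \{(x, f(x)) : x \in [0, 1]\}$, where $P' \subseteq [0, 1]$ is a specific perfect set and $f \in C^\infty([0,1])$ is designed so that near every point of $P'$ the graph of $f$ has a rich wiggle structure. The goal is that every uncountable $S \subseteq P$ contains $4$-point squiggles of arbitrarily small diameter, which will ensure $P \cap B$ is countable for every non-squiggly $B$.

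For uncountable $S \subseteq P$, the condensation set $S^*$ of $S$ (the points $x$ such that every open neighborhood of $x$ meets $S$ in uncountably many points) is a nonempty perfect subset of $P$, with $S \setminus S^*$ countable and $S \cap S^*$ dense in $S^*$. Since strict interiority of one point in the triangle of three others is an open condition in $(\RRR^2)^4$, any $4$-point squiggle in $S^*$ of diameter $< \delta$ yields one in $S$ of diameter $< \delta$ by perturbation. It therefore suffices to show that every perfect $Q \subseteq P$ contains $4$-point squiggles of arbitrarily small diameter. For such a $Q$, corresponding to a perfect $Q' \subseteq P'$, I select a two-sided accumulation point $q_0 \in Q'$ (a standard fact: perfect subsets of $\RRR$ have uncountably many such points) and four points $q_1 < q_2 < q_0 < q_3 < q_4$ in $Q'$ at arbitrarily small scale; by the wiggle structure of $f$ near $q_0$, the four points $(q_i, f(q_i))$ form a strict squiggle, modeled on the template configuration $(-2, -8), (-1, -1), (1/2, 1/8), (1, 1)$ on $y = x^3$ in which $(1/2, 1/8)$ is strictly interior to the triangle of the other three.

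The main obstacle is constructing $f$ and $P'$ so that the required squiggles are actually generated. Since $P'$ is perfect and $f$'s ``inflection-like'' structure must be dense in $P'$, the condition $f''(x) = 0$ at the accumulation points of inflections forces $f^{(k)} \equiv 0$ on $P'$ for every $k \ge 2$ (so $f$ is ``flat'' on $P'$). The actual wiggles must therefore live in the gaps of $P'$, with super-polynomially decaying amplitudes to preserve global $C^\infty$ regularity, yet be arranged so that for every perfect $Q' \subseteq P'$ and every two-sided accumulation point $q_0 \in Q'$, four appropriately chosen points of $Q'$ straddling $q_0$ at arbitrarily small scale produce a strict $4$-point squiggle on the graph. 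Engineering the wiggles in the gaps to meet this requirement for every perfect $Q'$ --- while keeping $f$ globally $C^\infty$ --- is the principal technical difficulty.
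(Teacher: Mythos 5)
Your reduction is sound: passing to condensation points and using the openness of strict interiority correctly reduces the problem to showing that every perfect subset of $P$ contains $4$-point squiggles of arbitrarily small diameter. But the proof stops exactly where the theorem's content lies. You explicitly leave open ``the principal technical difficulty'' of constructing $f$ and $P'$ so that the required squiggles are generated, so as it stands this is a plan, not a proof.

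Moreover, the plan points in a harder direction than necessary. You are trying to build squiggles out of an inflection-like (cubic) template, which forces you to put ``wiggles'' into the gaps of $P'$ and then fight the $C^\infty$ flatness constraints you yourself identify. The paper's construction avoids all of this: take a Cantor set $D$, let $h \in C^\infty(\RRR)$ be nonnegative with zero set exactly $D$ (Lemma \ref{lemma-cantor-zero}), and let $f = \int h$. Then $f$ is $C^\infty$, \emph{strictly increasing}, and $f'$ vanishes exactly on $D$; set $P = \mathrm{graph}(f \res D)$. Given an uncountable non-squiggly $A \subseteq P$ and an infinite $S \subseteq \dom(A)$ all of whose points are two-sided limits of $S$, pick $a < b < c$ in $S$. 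The chord $L$ from $(a,f(a))$ to $(c,f(c))$ has strictly positive slope (monotonicity), and after a small adjustment of $b$ one may assume $L(b) \ne f(b)$. If, say, $L(b) > f(b)$, then all three sides of the triangle $(a,f(a)),(b,f(b)),(c,f(c))$ have positive slope while $f'(b) = 0$, so $(b-\varepsilon, f(b-\varepsilon))$ lies strictly inside the triangle for small $\varepsilon$, and choosing $b - \varepsilon \in S$ gives the forbidden configuration (the case $L(b) < f(b)$ uses $b+\varepsilon$). The squiggle thus comes from the contrast between the vanishing derivative on $D$ and the positive slope of secants, not from any inflection structure; no points of $S$ need straddle $b$ symmetrically, and no delicate amplitude estimates in the gaps are needed. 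To complete your argument you would need to supply a construction of comparable concreteness, and the one you sketch has no clear route to termination.
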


Note that by Ramsey's Theorem, every infinite set in $\RRR^2$
has an infinite non-squiggly subset.

In Definition \ref{def-squig}, allowing $\delta < \infty$ makes
non-squiggly a local notion; so, piecewise linear arcs and
some spirals (such as $r = \theta \; ;\;  0 \le \theta < \infty$) 
are non-squiggly.
However, the results of this paper would be unchanged if we 
simply required $\delta=\infty$.
For $0 < \delta \le \infty$, if $E\subseteq \RRR^2$ 
meets a non-squiggly set $A$ in an uncountable set, then  
$E$ has uncountable intersection
with a subset of $A$ whose diameter is at most $\delta$.

The proof of Theorem \ref{thm-c2} uses the assumption
that each $C^2$ arc is parameterized by some $g$
whose derivative is nowhere $0$.
Dropping this requirement on $g'$ yields a weaker
notion of $C^\infty$, and a different result.
Call a $C^k$ arc \textit{strongly} $C^k$, and
say that an arc is \emph{weakly} $C^k$ iff it is the image of 
a $C^k$ path.
Then, an arc is weakly $C^{\infty}$ iff it is weakly $C^k$ for all $k$.

\begin{theorem}
\label{thm-meet}
If $E \subseteq \RRR^n$ is bounded and infinite, then 
it meets some weakly $C^\infty$ arc in an infinite set.
\end{theorem}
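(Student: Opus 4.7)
\textit{Proof plan.} My plan is to extract from $E$ a very rapidly convergent sequence of distinct points and to $C^\infty$-interpolate a simple path through them by the standard flat-bump technique, with the rate of decay of the sequence forcing smoothness at the limit point.

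First I use Bolzano--Weierstrass to obtain a sequence of distinct points $x_k \in E$ converging to some $p \in \RRR^n$; after translating I assume $p = 0$. By compactness of $S^{n-1}$ I pass to a subsequence so that the unit vectors $x_k/\|x_k\|$ converge, and I rotate so the limiting direction is $e_1$; after discarding finitely many terms I then have $(x_k)_1 \ge \|x_k\|/2 > 0$. Since the positive first coordinates $s_k := (x_k)_1$ tend to $0$, they must take infinitely many distinct values, so by further thinning I arrange that $s_k$ is strictly decreasing. Finally I thin once more (using $\|x_k\| \to 0$) to ensure $\|x_k\| < 2^{-k^2}$ for every $k$.

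Next I fix once and for all a $C^\infty$ function $\phi : [0,1] \to [0,1]$ that is strictly increasing, with $\phi(0) = 0$, $\phi(1) = 1$, and $\phi^{(j)}(0) = \phi^{(j)}(1) = 0$ for every $j \ge 1$ (for instance, the normalized integral of $s \mapsto e^{-1/s - 1/(1-s)}$). Setting $t_k = 2^{-k}$, I define $g : [0,1] \to \RRR^n$ by $g(0) = 0$, $g(t_k) = x_k$, and, for $t \in [t_{k+1}, t_k]$,
\[
g(t) \;=\; x_{k+1} + (x_k - x_{k+1})\, \phi\!\left(\frac{t - t_{k+1}}{t_k - t_{k+1}}\right).
\]
The strict monotonicity of $\phi$ together with $s_{k+1} < s_k$ makes the first coordinate of $g(t)$ a strictly increasing function of $t$ on all of $[0,1]$, so $g$ is automatically injective. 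The flatness of $\phi$ at its endpoints makes all one-sided derivatives of $g$ of every order vanish (and therefore agree) at each internal knot $t_k$, so $g \in C^\infty((0,1])$.

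The only remaining point, which I expect to be the main technical issue, is $C^\infty$-smoothness of $g$ at $t = 0$. Using $t_k - t_{k+1} = 2^{-(k+1)}$ and $\|x_k - x_{k+1}\| \le \|x_k\| + \|x_{k+1}\| \le 2 \cdot 2^{-k^2}$, I get on $[t_{k+1}, t_k]$ the bound
\[
\|g^{(j)}(t)\| \;\le\; M_j\, \frac{\|x_k - x_{k+1}\|}{(t_k - t_{k+1})^j} \;\le\; 2 M_j \cdot 2^{\,j(k+1) - k^2},
\]
where $M_j = \|\phi^{(j)}\|_\infty$, and the right side tends to $0$ as $k \to \infty$ for each fixed $j$. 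A standard induction on $j$ (using the mean-value consequence that if $g^{(j-1)}$ is continuous on $[0, t_0]$, differentiable on $(0, t_0]$, and $g^{(j)}(t) \to 0$ as $t \to 0^+$, then $g^{(j)}(0)$ exists and equals $0$) then yields $g \in C^\infty([0,1])$. Hence $A := g([0,1])$ is a weakly $C^\infty$ arc meeting $E$ in the infinite set $\{x_k : k \in \NNN\}$.
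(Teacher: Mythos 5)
Your proof is correct and follows essentially the same approach as the paper: thin $E$ to a sequence with $\|x_k\|\le 2^{-k^2}$, place knots at $t_k=2^{-k}$, and interpolate with a bump function flat to all orders at its endpoints, with the super-polynomial decay beating the $2^{jk}$ growth of the $j$th-derivative bounds at the limit point. The only (harmless) differences are that you secure injectivity by rotating to make the first coordinate strictly monotone rather than by the paper's decreasing-norms condition, and you verify smoothness at $t=0$ directly via the mean-value induction instead of invoking the paper's general flat-interpolation lemma (Lemma \ref{lem-interpD}).
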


Theorems \ref{thm-c2} and \ref{thm-squig} are proved 
in  Section \ref{sec-neg}; 
Theorem \ref{thm-meet} and some related facts are proved in 
Section \ref{sec-rmks}.

\section{Remarks on Hermite Splines}
\label{sec-hermite}
We construct the arc of Theorem \ref{thm-c1}
by first  producing a ``nice'' Cantor set $Q \subseteq P$. 
Then we apply results,
described in this section,
that make it possible to draw a smooth curve through a closed set.
These results are a natural extension of results of Hermite
for drawing a curve through a finite set.
Our proof of Theorem \ref{thm-c1}
reduces the problem to the case where $Q \subset \RRR^2$
is the graph of a function with domain $D \subset \RRR$; 
then we extend this function to all of $\RRR$ to produce 
the desired arc.

First consider the case $|D| = 2$, or 
interpolation on an interval $[a_1, a_2]$; we find
$f \in C^1(\RRR)$ with predetermined values $b_1,b_2$
and slopes $s_1,s_2$ at $a_1,a_2$, \emph{and} we bound $f,f'$
on $[a_1, a_2]$ in terms of the \emph{three} slopes:
$s := (b_2 - b_1)/(a_2 - a_1)$, and $s_1, s_2$.
Following Hermite, $f$ will be the natural
cubic interpolation function.  Our bounds show that if
$s, s_1, s_2$ are all close to each other, then $f$ is close to
the linear interpolation function $L$.

\begin{lemma}
\label{lemma-interp-cubic}
Given $s_1,s_2,b_1,b_2$ and $a_1 < a_2$, let
$s = (b_2 - b_1)/(a_2 - a_1)$, and let
$L(x) = b_1 + s(x - a_1)$.  Let
$M = \max(|s_1 - s|,|s_2 - s|)$. 
Then there is a cubic $f$
with each $f(a_i) = b_i$ and each $f'(a_i) = s_i$,
such that 
\begin{itemizz}
\item[1.] $|(f(x_2) - f(x_1)) / (x_2 - x_1) - s| \le 3M$ whenever
$a_1 \le x_1 < x_2 \le a_2$.
\end{itemizz}
Moreover, for all $x \in [a_1,a_2]$:
\begin{itemizz}
\item[2.] $|f'(x) - s| \le 3M$.
\item[3.] $|f(x) - L(x)| \le 2M(a_2 - a_1)$.
\end{itemizz}
\end{lemma}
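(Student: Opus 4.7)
The plan is to use the Hermite ansatz $f(x) = L(x) + (x-a_1)(x-a_2)\, q(x)$, where $q$ is a polynomial of degree at most $1$. The quadratic factor vanishes at $a_1, a_2$, so $f(a_i) = L(a_i) = b_i$ automatically, and the two free coefficients of $q$ are exactly enough to match the two slope conditions. Differentiating and evaluating at the endpoints (with $h = a_2 - a_1$), the conditions $f'(a_i) = s_i$ translate into
\[
q(a_1) = (s-s_1)/h, \qquad q(a_2) = (s_2-s)/h,
\]
which determines $q$, and hence the cubic $f$, uniquely.

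The key estimate is that $q$ is linear with endpoint values bounded in absolute value by $M/h$, so $|q(x)| \le M/h$ on $[a_1,a_2]$, while the (constant) derivative $q'$ has $|q'| \le 2M/h^2$. Combined with the elementary pointwise bound $|x-a_1||x-a_2| \le h^2/4$ on $[a_1,a_2]$, claim~3 is immediate:
\[
|f(x) - L(x)| \;\le\; (h^2/4)(M/h) \;=\; Mh/4 \;\le\; 2Mh.
\]

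For claim~2, differentiating the ansatz gives $f'(x) - s = (2x - a_1 - a_2)\, q(x) + (x-a_1)(x-a_2)\, q'(x)$. Since $|2x - a_1 - a_2| \le h$ on $[a_1, a_2]$, the two terms contribute at most $M$ and $M/2$ respectively, giving $|f'(x) - s| \le 3M/2 \le 3M$. Claim~1 is then a one-line consequence of claim~2 via the mean value theorem applied to $\phi(x) = f(x) - sx$, whose derivative $\phi' = f' - s$ satisfies $|\phi'| \le 3M/2$ throughout $[a_1,a_2]$: for $a_1 \le x_1 < x_2 \le a_2$,
\[
\left| \frac{f(x_2) - f(x_1)}{x_2 - x_1} - s \right| \;=\; \left| \frac{\phi(x_2) - \phi(x_1)}{x_2 - x_1} \right| \;\le\; 3M/2 \;\le\; 3M.
\]

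There is no real conceptual obstacle; the lemma is a routine computation once the ansatz has been adopted. The choice of ansatz is what makes the argument clean, because it decouples the interpolation of values from the interpolation of slopes and reduces the latter to a trivial linear problem for $q$, after which every quantity of interest is controlled by the uniform bounds on $q$, $q'$, and $|x-a_1||x-a_2|$.
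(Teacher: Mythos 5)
Your proposal is correct and is essentially the paper's argument: your $f = L + (x-a_1)(x-a_2)q$ with $q$ affine is exactly the paper's Hermite cubic $L + \beta_2(x-a_1)^2(x-a_2) + \beta_1(x-a_1)(x-a_2)^2$ (with $q(x) = \beta_2(x-a_1)+\beta_1(x-a_2)$), the estimates use the same bound $(x-a_1)(a_2-x)\le (a_2-a_1)^2/4$, and claim~1 is deduced from claim~2 by the Mean Value Theorem just as in the paper. Your constants ($3M/2$ and $Mh/4$) are in fact sharper than the stated $3M$ and $2Mh$, which is fine since the lemma only asserts the weaker bounds.
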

\begin{proof}
(1) follows from (2) and the Mean Value Theorem.   Now, let
\begin{align*}
f(x) &=
L(x) + \beta_2 (x - a_1)^2 (x - a_2) + \beta_1 (x - a_1) (x - a_2)^2 \\
f'(x) &=
s + \beta_2 (x - a_1)^2 + \beta_1  (x - a_2)^2 +
2 (\beta_2 + \beta_1) (x - a_1) (x - a_2) \ \ .
\end{align*}
Then $f(a_i) = b_i$ is obvious, and
setting $\beta_i = (s_i - s)/ (a_2 - a_1)^2$ we get $f'(a_i) = s_i$.
To see (2) and (3), note that
 $|\beta_i| \le  M / (a_2 - a_1)^2$,
and $(x-a_1)(a_2-x)\le (a_2-a_1)^2/4$ 
(the maximum of $(x-a_1)(a_2-x)$ occurs
at the midpoint $x=\frac{a_1+a_2}{2}$). 
\end{proof}

Next, we consider extending, to all of $\RRR$,
a $C^1$ function defined on a closed $D \subset \RRR$.  First
note that there are two possible
meanings for ``$f \in C^1(D)$'':

\begin{definition}
Assume that $f,h \in C(D,\RRR)$, where $D$ is a closed subset of $\RRR$.
Then $f' = h$ \emph{in the strong sense} iff 
\[
\begin{array}{ll}
& \forall x \in D \; \forall \varepsilon > 0 \;
\exists \delta > 0  \;
\forall x_1,x_2 \in D \\
&\qquad \left[ x_1 \ne x_2 \ \&\ |x_1 - x|, |x_2 - x| < \delta
\ \longrightarrow\  \left|\frac{f(x_2) - f(x_1)}{x_2 - x_1} - h(x) \right| < 
        \varepsilon \right]\ \ .
\end{array} 
\]
\end{definition}
The usual or weak sense would only require this with $x_1$ replaced
by the point $x$.  When $D$ is an interval, the two senses are equivalent
by the continuity of $h$ and the Mean Value Theorem.
Note that $f' = h$ in the strong sense iff
there is a $g \in C(D\times D, \RRR)$ such that $g(x,x) = h(x)$ for
each $x$ and
$g(x_1,x_2) = g(x_2, x_1) =
(f(x_2) - f(x_1)) / (x_2 - x_1)$ whenever $x_1 \ne x_2$.

If $D$ is finite, then  $f' = h$ in the strong sense for \emph{any}
$f,h : D \to \RRR$, and the cubic Hermite spline is 
an  $\tilde f \in C^1(\RRR,\RRR)$ with
$\tilde f \res D = f$ and $\tilde f' \res D = h$.
The following lemma generalizes this to an arbitrary closed $D$:

\begin{lemma}
\label{lemma-interp}
Assume that $f,h \in C(D,\RRR)$, where $D$ is a closed subset of $\RRR$,
and $f' = h$ in the strong sense.  Then there
are $\tilde f, \tilde h \in C(\RRR,\RRR)$ such that
$\tilde f' =  \tilde h $,
$\tilde f \supseteq f$, and
$\tilde h \supseteq h$.
\end{lemma}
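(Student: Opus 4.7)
The plan is to extend $f$ explicitly on each open interval of $\RRR \setminus D$ using the cubic Hermite interpolants supplied by Lemma~\ref{lemma-interp-cubic}, and to use the quantitative bounds of that lemma to verify $C^1$ behavior at the boundary points. Concretely, I would write $\RRR \setminus D$ as a countable disjoint union of open intervals. For a bounded component $(a_1, a_2)$ with both endpoints in $D$, define $\tilde f$ on $[a_1, a_2]$ to be the cubic furnished by Lemma~\ref{lemma-interp-cubic} with data $b_i = f(a_i)$ and $s_i = h(a_i)$. On an unbounded component with finite endpoint $a \in D$, simply extend linearly by $\tilde f(x) = f(a) + h(a)(x - a)$. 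On $D$ set $\tilde f = f$, and define $\tilde h$ to be the pointwise derivative of $\tilde f$ off $D$ together with $\tilde h = h$ on $D$. Off $D$ the function $\tilde f$ is automatically smooth and $\tilde f' = \tilde h$.

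The remaining task is to verify, for every $x \in D$, that $\tilde f$ is differentiable at $x$ with $\tilde f'(x) = h(x)$, and that $\tilde h$ is continuous at $x$. Given a sequence $y_k \to x$, I split into cases. If $y_k \in D$, the strong-sense hypothesis together with continuity of $h$ on $D$ handles both statements directly. If each $y_k$ lies in some gap, then after passing to a subsequence either all $y_k$ lie in a single fixed gap whose closure contains $x$ (so $x$ is an endpoint and the smooth cubic on that gap provides the one-sided limit), or the $y_k$ lie in pairwise distinct gaps $(a_1^k, a_2^k)$. In this latter case the gap lengths $a_2^k - a_1^k$ must tend to $0$, because in any bounded neighbourhood of $x$ only finitely many pairwise-disjoint intervals can have any fixed positive length; hence both endpoints $a_1^k, a_2^k \to x$. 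The strong-sense assumption then gives $s^k := (f(a_2^k)-f(a_1^k))/(a_2^k - a_1^k) \to h(x)$, and continuity of $h$ on $D$ gives $s_i^k = h(a_i^k) \to h(x)$, so the constant $M^k$ of Lemma~\ref{lemma-interp-cubic} tends to $0$. The bounds (2) and (3) of that lemma then force $\tilde h(y_k) = \tilde f'(y_k) \to h(x)$ and, after a short triangle-inequality computation using $|\tilde f(y_k) - L^k(y_k)| \le 2 M^k (a_2^k - a_1^k)$, also $(\tilde f(y_k) - \tilde f(x))/(y_k - x) \to h(x)$.

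The main obstacle is precisely this ``distinct shrinking gaps'' case, and it is exactly the reason Lemma~\ref{lemma-interp} requires the strong sense of $f' = h$: without it, the secant slopes $s^k$ across shrinking gaps near an accumulation point of $D$ need not converge to $h(x)$, the Hermite cubics on successive gaps could behave wildly, and no $C^1$ extension of the required sort need exist. Everything else is bookkeeping: the choice of cubic is forced by the endpoint data, the continuity of $\tilde f$ and $\tilde h$ off $D$ is automatic, and the continuity of $\tilde h$ at points of $D$ follows from the same case analysis applied to $\tilde h(y_k)$ rather than to the difference quotient.
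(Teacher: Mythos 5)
Your construction is exactly the paper's: cubic Hermite interpolants from Lemma~\ref{lemma-interp-cubic} on bounded gaps, linear extension on unbounded gaps, and a verification at points of $D$ that rests on the strong-sense hypothesis controlling secant slopes across shrinking gaps. The sequential case analysis (points of $D$, a single fixed gap, distinct shrinking gaps) is a cosmetic repackaging of the paper's $\varepsilon$--$\delta$ argument, and your continuity check for $\tilde h$ via bound (2) is fine. But there is one step that does not go through as you state it: the claim that the difference quotient $(\tilde f(y_k)-\tilde f(x))/(y_k-x)\to h(x)$ follows from bound (3), i.e.\ from $|\tilde f(y_k)-L^k(y_k)|\le 2M^k(a_2^k-a_1^k)$, by a triangle inequality. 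Dividing that estimate by $|y_k-x|$ produces the factor $(a_2^k-a_1^k)/|y_k-x|$, which is unbounded when $y_k$ sits very close to the endpoint of its gap nearest to $x$ while the gap itself is comparatively long (e.g.\ $x=0$, gaps $(-1/k,-1/k^3)$, $y_k=-2/k^3$); the same problem infects the term $(L^k(y_k)-f(x))/(y_k-x)$, since $L^k$ is anchored at the far endpoint $a_1^k$, which may be much farther from $x$ than $y_k$ is. So $M^k\to 0$ alone does not rescue this estimate.

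The repair is to use the secant-slope bound (1) of Lemma~\ref{lemma-interp-cubic} rather than the sup-norm bound (3): with $a_2^k$ the gap endpoint between $y_k$ and $x$, bound (1) gives $|(\tilde f(y_k)-\tilde f(a_2^k))/(y_k-a_2^k)-s^k|\le 3M^k$, the strong-sense hypothesis controls $(f(a_2^k)-f(x))/(a_2^k-x)$, and the overall slope $(\tilde f(y_k)-\tilde f(x))/(y_k-x)$ is a convex combination of these two slopes (this is the paper's ``elementary geometry'' observation), hence lies between them and converges to $h(x)$. This is precisely how the paper closes the argument; with that substitution your proof is complete.
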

\begin{proof}
Let $\JJ$ be the collection of pairwise disjoint open intervals covering 
$\RRR \backslash D$. 
For each interval $J\in \JJ$, we shall define 
$\tilde f, \tilde h$ on $J$.

If $J$ is the unbounded interval $(a_1, \infty)$, with $a_1 \in D$,
define $\tilde f$ and $\tilde h$ by the linear
$\tilde f(x) = f(a_1) + (x  - a_1) h(a_1)$ and 
$\tilde h(x) = h(a_1)$, for $x \in J$. 
Then $\tilde f, \tilde h$ are continuous on $\overline J$
and $\tilde f' =  \tilde h $ on $J$.  
At $a_1$,
the derivative of $\tilde f$ from the right is $h(a_1)$;
the derivative of $\tilde f$ from the left,
as well as the continuity of $\tilde f, \tilde h$ from the left,
depend on how we extend $f$ to the bounded intervals.

The unbounded interval $(-\infty, a_2)$ is handled likewise.

Say $J = (a_1, a_2)$, with $a_1,a_2 \in D$.
On $J$, let $\tilde f$ be the cubic obtained from Lemma
\ref{lemma-interp-cubic}, with $b_i = f(a_i)$ and $s_i = h(a_i)$.
Then $\tilde h$ is the quadratic $\tilde f '$ on $J$.

To finish, we verify that
$\tilde f, \tilde h$ are continuous and $\tilde f' =  \tilde h $
on $\RRR$.
Fix $z \in D$.  
Since differentiability implies continuity,
it suffices to show that $\tilde h$ is continuous at $z$, and that
$h(z)=\tilde f'(z) = 
 \lim_{x \to z} (\tilde f(x) - \tilde f(z)) / (x - z)$.
We  verify 
the continuity of $\tilde h$ from the left at $z$, and 
the difference quotient's limit for $x$
approaching $z$ from the left; 
a similar argument handles these from the right.  
Let $\sigma =  h(z) = \tilde h(z)$.    Fix $\varepsilon > 0$.
Apply continuity of $f,h$ on $D$, and the fact that
$f' = h$ in the strong sense, to fix $\delta > 0$ such
that whenever $z - \delta < a_1 < a_2 < z$ with $a_1,a_2 \in D$,
the quantities $|s - \sigma|$, $|s_i - \sigma|$, 
$|b_i - f(z)|$, 
$|(f(a_2) - f(z))/ (a_2 - z) - \sigma|$ 
are all less than $\varepsilon$,
where $s_i = h(a_i)$ and $b_i = f(a_i)$, for $i=1,2$, and
$s = (b_2 - b_1)/(a_2 - a_1)$.
Let $M = \max(|s_1 - s|,|s_2 - s|)$, as in
Lemma \ref{lemma-interp-cubic};
so $M  \le 2 \varepsilon$.

Assume that $z$ is a limit
from the left of points of $D$ and of points of $\RRR \backslash D$;
otherwise checking continuity and the derivative
from the left is trivial.
Thus, $\delta$ may be taken small enough so that 
$(z-\delta,z)$ misses any unbounded interval in $\JJ$.
For
$a_1,a_2 \in D$ with $(a_1,a_2) \in \JJ$ and
$x\in \RRR$ with
$z-\delta < a_1 \le x < a_2 < z$, the bounds  
from Lemma \ref{lemma-interp-cubic} imply that
$|\tilde h (x) - \sigma| \le 
|\tilde h (x) - s | + |s - \sigma| \le 
3M + \varepsilon \le 7 \varepsilon $.
So $\tilde h$ is continuous.
To see that $h(z)=\tilde f'(z)$,
observe that by elementary geometry, 
the slope $(\tilde f(x) - \tilde f(z)) / (x - z)$ is between the
slopes $(\tilde f(x) - \tilde f(a_2)) / (x - a_2)$ and
$(\tilde f(a_2) - \tilde f(z)) / (a_2 - z)$.
Applying Lemma \ref{lemma-interp-cubic} again,
$|(\tilde f(x) - \tilde f(a_2)) / (x - a_2) - \sigma| \le
3M + \varepsilon \le 7 \varepsilon$,
so we are done.
\end{proof}

\section{Some Flavors of OCA}
\label{sec-oca}
The proofs of Theorems
\ref{thm-c1-pfa} and \ref{thm-c1} will require the results of this section.

\begin{definition}
For any set $E$,
let $E^\dag = (E \times E) \setminus \{(x,x) : x \in E\}$.
If $W \subseteq E^\dag$ with $W = W\iv$,  then
$T \subseteq E$ is \emph{$W$--free} iff
$T^\dag \cap W = \emptyset$,
and $T$ is \emph{$W$--connected} iff $T^\dag \subseteq W$.

Then SOCA is the assertion that whenever $E$ is an uncountable
separable metric space and $W = W\iv \subseteq E^\dag$ is open,
there is either
an uncountable $W$--free set or an uncountable $W$--connected set.
\end{definition}

SOCA follows from PFA, but not from $\MA(\aleph_1)$.
It clearly contradicts CH.
However, it is well-known \cite{Ga} that SOCA is
a ZFC theorem when $E$ is Polish:

\begin{lemma}
\label{lemma-polish-soca}
Assume that $E$ is an uncountable Polish space,
$W \subseteq E^\dag$ is open, and  $W = W\iv$.
Then there is a Cantor set $Q \subseteq E$ which is either
$W$--free or $W$--connected.
\end{lemma}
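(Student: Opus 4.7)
The plan is to prove the dichotomy by a standard Cantor-scheme construction in $E$, after first reducing to the case where $E$ is perfect. Since $E$ is uncountable Polish, its Cantor--Bendixson perfect kernel $E^*$ is nonempty, closed, and uncountable; replacing $E$ by $E^*$ and $W$ by $W \cap (E^*)^\dag$, I may assume $E$ is perfect, and fix a compatible complete metric $d$.

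The main step is a case split: either (i) some nonempty open $V \subseteq E$ is $W$-free, or (ii) every nonempty open $V \subseteq E$ satisfies $V^\dag \cap W \neq \emptyset$. In case (i), perfectness of $E$ yields a Cantor set $Q \subseteq V$, and $Q^\dag \cap W \subseteq V^\dag \cap W = \emptyset$, so $Q$ is $W$-free. In case (ii), I would build a Cantor scheme $\{U_s : s \in 2^{<\omega}\}$ of nonempty open subsets of $E$ such that, for every $s$: $\overline{U_{s\cat 0}}$ and $\overline{U_{s\cat 1}}$ are disjoint subsets of $U_s$; $\diam(U_s) \le 2^{-|s|}$; and, crucially, $\overline{U_{s\cat 0}} \times \overline{U_{s\cat 1}} \subseteq W$. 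The inductive step uses case (ii) to pick distinct $x,y \in U_s$ with $(x,y) \in W$, and then uses openness of $W$ together with regularity of $E$ to shrink to small neighborhoods $U_{s\cat 0} \ni x$, $U_{s\cat 1} \ni y$ whose product closure remains in $W$ and inside $U_s$. Setting $Q = \bigcap_n \bigcup_{s \in 2^n} \overline{U_s}$, completeness of $d$ and the diameter condition make $Q$ a Cantor set.

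To verify $W$-connectedness of $Q$ in case (ii): for distinct $p,q \in Q$ lying on branches $\alpha \neq \beta \in 2^\omega$, let $u$ be their longest common initial segment, so that $\alpha$ and $\beta$ extend $u\cat 0$ and $u\cat 1$ in some order. Then $p$ and $q$ lie in $\overline{U_{u\cat 0}}$ and $\overline{U_{u\cat 1}}$, so $(p,q) \in W$ by the sibling condition. Enforcing the $W$-condition only on siblings suffices because nested inclusions automatically propagate it to all incomparable pairs. The main subtle point---and the closest thing to an obstacle---is the regularity argument in the inductive step, which upgrades $U_{s\cat 0} \times U_{s\cat 1} \subseteq W$ (immediate from openness of $W$) to the closed version needed for the intersection; everything else is routine Polish-space bookkeeping.
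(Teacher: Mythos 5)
Your proof is correct and follows essentially the same route as the paper's: reduce to a perfect set, split on whether some nonempty open set is $W$--free (the paper phrases this as ``every $W$--free set is nowhere dense''), and in the remaining case build a Cantor scheme whose sibling pieces have product contained in $W$, which propagates to all incomparable branch pairs. The only cosmetic difference is that the paper works with clopen pieces of a Cantor set, where your regularity/closure step is automatic.
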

\begin{proof}
Shrinking $E$, we may assume that $E$ is a Cantor set;
in particular, non-empty open sets are uncountable.
Assume that no Cantor subset  is $W$--free.
Since $W$ is open, the closure of a $W$--free set is $W$--free;
thus every $W$--free set has countable closure, and is hence nowhere dense.

Now, inductively construct a tree, $\{P_s : s \in 2^{< \omega}\}$.
Each $P_s$ is a non-empty clopen subset of $E$,
with $\diam(P_s) \le 2^{-\lh(s)}$.
$P_{s\cat0}$ and $P_{s\cat1}$ are disjoint
subsets of $P_s$ such that
$(P_{s\cat0} \times P_{s\cat1}) \subseteq W$.
Let $Q = \bigcup\{\bigcap_n P_{f\res n} : f \in 2^\omega\}$;
then $Q$ is $W$--connected.
\end{proof}

An ``open covering'' version of SOCA follows by induction on $\ell$:

\begin{lemma}
\label{lemma-soca-cover}
Let $E$ be an uncountable separable metric space, with
$E^\dag = \bigcup_{i < \ell} W_i$, where $\ell \in \omega$
and each $W_i = W_i\iv$ is open in $E^\dag$. 
Assuming $\SOCA$, there is an uncountable $T \subseteq E$ such
that $T$ is $W_i$--connected for some $i$.  
In the case that $E$ is Polish, this is a $\ZFC$ result
and $T$ can be made perfect.
\end{lemma}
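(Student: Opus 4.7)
The plan is to proceed by induction on $\ell$, with the crucial dichotomy at each step supplied by SOCA (respectively Lemma \ref{lemma-polish-soca} in the Polish case). The case $\ell=0$ cannot arise since $E^\dag=\emptyset$ would force $|E|\le 1$. The base case $\ell=1$ is immediate: $E^\dag=W_0$ says $E$ itself is $W_0$-connected, and if $E$ is Polish one extracts a Cantor subset by the perfect set theorem, which inherits $W_0$-connectedness.

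For the inductive step from $\ell-1$ to $\ell$, apply SOCA to $E$ with the open symmetric set $W_0$. If the outcome is an uncountable $W_0$-connected $T$, we are done with $i=0$. Otherwise we obtain an uncountable $W_0$-free set $T'\subseteq E$. Then $(T')^\dag\cap W_0=\emptyset$, so
\[
(T')^\dag \;\subseteq\; \bigcup_{1\le i<\ell} W_i \;=\; \bigcup_{1\le i<\ell}\bigl(W_i\cap (T')^\dag\bigr),
\]
and each $W_i\cap (T')^\dag$ is open and symmetric in $(T')^\dag$. Since $T'$ is an uncountable separable metric space (as a subspace of $E$), the inductive hypothesis with $\ell-1$ covering sets produces an uncountable $T\subseteq T'$ that is $(W_i\cap(T')^\dag)$-connected for some $i\ge 1$. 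In particular $T^\dag\subseteq W_i$, so $T$ is $W_i$-connected in $E$, as required.

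For the Polish strengthening, replace every invocation of SOCA above with Lemma \ref{lemma-polish-soca}. The set $T'$ produced in the $W_0$-free alternative is then a Cantor set, hence itself an uncountable Polish space to which the inductive hypothesis (in its Polish form) applies and yields a perfect $T\subseteq T'$. The only point demanding a little care is the verification that the hypotheses propagate through the induction, namely that $T'$ inherits the structural assumptions needed to apply the lemma at the next stage; this is automatic because subspaces of separable metric spaces are separable metric, and closed subspaces of Polish spaces (and in particular Cantor sets) are Polish. Beyond this bookkeeping, the argument involves no further difficulty.
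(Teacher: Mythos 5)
Your proof is correct and follows exactly the route the paper intends: the paper offers no written proof beyond the remark that the lemma ``follows by induction on $\ell$,'' and your induction --- applying SOCA (or Lemma \ref{lemma-polish-soca}) to $W_0$, taking the connected alternative if available, and otherwise passing to the $W_0$-free set where the remaining $\ell-1$ colors cover --- is precisely that argument, with the base case and the Polish bookkeeping handled properly.
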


There is also a version of this lemma obtained by replacing
the covering by a continuous function:

\begin{lemma}
\label{lemma-cont-soca}
Assume that $E$ is an uncountable Polish space,
$F$ is a compact metric space, $g \in C(E^\dag, F)$, and
$g(x,y) = g(y,x)$ whenever $x \ne y$.
Then there is a Cantor set $Q \subseteq E$ such that
$g \res Q^\dag$ extends continuously to some $\hat g \in C(Q\times Q, F)$.
\end{lemma}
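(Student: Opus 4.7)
I will construct $Q$ as the body of a Cantor scheme $\{P_s : s \in 2^{<\omega}\}$ of nested non-empty perfect clopen subsets of $E$, satisfying $\diam(P_s) \le 2^{-\lh(s)}$ together with the extra control that, for each $s$ with $\lh(s) = n \ge 1$, the image $g(P_s^\dag)$ has diameter at most $2^{-n}$ in $F$. Any such scheme suffices: each $x \in Q$ has a unique address $x \in P_{s_n(x)}$ at each level $n$, and the nested compact sets $\overline{g(P_{s_n(x)}^\dag)}$ shrink to a single point $c_x \in F$. Set $\hat g(x,y) = g(x,y)$ for $x \ne y$ and $\hat g(x,x) = c_x$. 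Continuity off the diagonal is inherited from $g$. At a diagonal point $(x,x)$, the clopen neighborhood $P_{s_n(x)} \cap Q$ of $x$ in $Q$ has the property that any pair $(y,z)$ inside it satisfies $\hat g(y,z) \in \overline{g(P_{s_n(x)}^\dag)}$ (both for $y \ne z$ directly and for $y = z$ by the definition of $c_y$), so $d_F(\hat g(y,z), c_x) \le 2^{-n}$, giving the desired continuous extension.

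The inductive construction rests on a single shrinking step: given a non-empty perfect $P \subseteq E$ and $\eta > 0$, I claim there is a perfect $P^* \subseteq P$ with $\diam g(P^{*\dag}) \le \eta$. Since $F$ is compact metric, cover it by finitely many open sets $V_0, \dots, V_{\ell-1}$ of diameter at most $\eta$, and let $W_i = \{(x,y) \in P^\dag : g(x,y) \in V_i\}$. Each $W_i$ is open, the hypothesis $g(x,y) = g(y,x)$ forces $W_i = W_i\iv$, and the $W_i$ cover $P^\dag$. The ZFC form of Lemma \ref{lemma-soca-cover} applied to the Polish space $P$ then supplies a perfect $P^* \subseteq P$ which is $W_i$-connected for some $i$, whence $g(P^{*\dag}) \subseteq V_i$.

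Starting from a perfect $P_\emptyset \subseteq E$, and given $P_s$ at level $n$, I apply this shrinking with $\eta = 2^{-n-1}$ to get $P_s^* \subseteq P_s$, and then pick any two disjoint non-empty clopen perfect subsets $P_{s\cat 0}, P_{s\cat 1} \subseteq P_s^*$ each of $E$-diameter at most $2^{-n-1}$; this is always possible since $P_s^*$ is a perfect set in a metric space. The inclusion $P_{s\cat i}^\dag \subseteq P_s^{*\dag}$ propagates the $F$-oscillation bound $2^{-n-1}$ to level $n+1$. The only genuinely non-trivial step is choosing the right open cover of $P^\dag$ so that Lemma \ref{lemma-soca-cover} directly controls the oscillation of $g$; once that observation is made, the tree bookkeeping and the diagonal continuity check are routine.
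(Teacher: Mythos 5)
Your construction is correct and is essentially the paper's own proof: the paper builds the same Cantor scheme with $\diam(P_s)\le 2^{-\lh(s)}$ and $\diam(g(P_s^\dag))\le 2^{-\lh(s)}$ obtained from Lemma \ref{lemma-soca-cover}, exactly as you do by pulling back a finite small-diameter open cover of $F$ through $g$. You merely spell out the diagonal extension and the continuity check, which the paper leaves implicit.
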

\begin{proof}
Construct a tree, $\{P_s : s \in 2^{< \omega}\}$.
Each $P_s$ is a Cantor subset of $E$,
with $\diam(P_s) \le 2^{-\lh(s)}$.
$P_{s\cat0}$ and $P_{s\cat1}$ are disjoint
subsets of $P_s$.
Also, apply Lemma \ref{lemma-soca-cover} to get
$\diam(g( P_s^\dag)) \le 2^{-\lh(s)}$.
Let $Q = \bigcup\{\bigcap_n P_{f\res n} : f \in 2^\omega\}$.
\end{proof}

Now, to prove Theorem \ref{thm-c1-pfa}, we need,
under PFA, a version of Lemma \ref{lemma-cont-soca}
where $E$ is just an uncountable subset of a Polish space.
We begin with the following, from
Abraham,  Rubin, and  Shelah \cite{ARS}:

\begin{theorem}
\label{thm-oca-ars}
Assume $\PFA$.  Then $\OCAARS$ holds.  That is,
let $E$ be a separable metric space of size $\aleph_1$.
Assume that $E^\dag = \bigcup_{i < \ell} W_i$, where $\ell \in \omega$
and each $W_i = W_i\iv$ is open in $E^\dag$.  Then $E$ can be partitioned
into sets $\{A_j : j \in \omega\}$ such that for each $j$,
$A_j$ is $W_i$--connected for some $i$.
\end{theorem}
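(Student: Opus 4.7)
My approach is to first reduce to the case $\ell = 2$ by induction on $\ell$, and then to handle that base case via a proper forcing argument using $\PFA$. The case $\ell = 1$ is trivial since $E$ itself is $W_0$-connected. For the inductive step from $\ell-1$ to $\ell$, given the cover $W_0, \ldots, W_{\ell-1}$, merge the last $\ell-1$ pieces into a single open symmetric set $W' := \bigcup_{i \geq 1} W_i$. Applying the $\ell = 2$ conclusion to the cover $\{W_0, W'\}$ partitions $E$ into countably many pieces each of which is either $W_0$-connected or $W'$-connected; for every $W'$-connected piece $A$ we have $A^\dag \subseteq \bigcup_{i \geq 1} W_i$, so the inductive hypothesis applied to $A$ (with the $\ell-1$ colors $W_i \cap A^\dag$ for $i \geq 1$) further partitions $A$ into countably many $W_i$-connected sub-pieces. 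Assembling these countable partitions yields the desired countable partition of $E$.

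\textbf{The forcing for $\ell = 2$.} I would introduce a forcing $\mathbb{P}$ whose conditions are finite partial functions $p : \dom(p) \to \omega \times 2$ with $\dom(p) \subseteq E$ finite, subject to the coherence requirement that $p(x) = p(y) = (j, i)$ with $x \neq y$ implies $(x, y) \in W_i$. Order by reverse inclusion. A sufficiently generic filter determines a total function $F : E \to \omega \times 2$, and the fibers $A_j := F\iv(\{j\} \times \{0, 1\})$ form a countable partition of $E$ in which each $A_j$ is $W_{i_j}$-connected for the color $i_j$ recorded by $F$. For each $x \in E$, the set $D_x := \{p \in \mathbb{P} : x \in \dom(p)\}$ is dense, because any $p$ can be extended by placing $x$ into a fresh class $(j, 0)$ with $j$ larger than all classes appearing in $p$.

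\textbf{Main obstacle.} The heart of the proof is verifying that $\mathbb{P}$ is proper; this is the step where $\PFA$, rather than $\MA(\aleph_1)$, is essential, since $\mathbb{P}$ is generally not ccc. Given a countable elementary submodel $M \prec H_\theta$ containing all relevant parameters and a condition $p_0 \in M$, one must construct an $(M, \mathbb{P})$-generic strengthening of $p_0$. The ABS approach does this via a combinatorial oscillation and $\Delta$-system argument: enumerate candidate one-point extensions of $p_0$ adding points of $E \setminus M$, apply a Ramsey-style thinning to extract an uncountable coherent subfamily, and then exploit the openness of $W_0, W_1$ to perturb any such candidate extension into one matching a member already definable in $M$, thereby ensuring that sufficiently many extensions meet each dense set in $M$. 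Once properness is in hand, $\PFA$ applied to the $\aleph_1$-many dense sets $\{D_x : x \in E\}$ yields a generic filter whose union is the desired total function $F$, and hence the partition.
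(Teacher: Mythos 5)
The paper does not actually prove this theorem: it quotes it from Abraham--Rubin--Shelah \cite{ARS}, whose Theorem~3.1 obtains the consistency of $\OCAARS$ with $\MA(\aleph_1)$ by an \emph{iterated ccc forcing}, and the paper merely observes that the same construction gives the statement outright under $\PFA$. So the real question is whether your sketch stands on its own. Your reduction to $\ell=2$ by induction is fine (countable $W'$-connected pieces can be split into singletons), and the density of the sets $D_x$ and the extraction of the partition from a sufficiently generic filter are routine. The problem is that everything of substance has been deferred to the properness claim, which is asserted but not proved.

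That is a genuine gap, not a stylistic one. You are right that $\mathbb{P}$ is not ccc (an uncountable $W_0$-free set $Y$ gives the uncountable antichain $\{\,\{(x,(0,0))\}: x\in Y\,\}$), so $\MA(\aleph_1)$ cannot be applied to it; but the paragraph offered in place of a properness proof --- ``oscillation,'' ``Ramsey-style thinning,'' ``perturb any candidate extension into one matching a member definable in $M$'' --- does not identify a mechanism. The concrete obstruction is this: given $r\le q$ with points of $E\setminus M$ placed in a class of color $i$, and a dense $D\in M$, one must find a condition in $D\cap M$ whose new points in that class are $W_i$-related to those external points; the openness of $W_i$ gives no purchase near the diagonal, and for a weakly Luzin set this is exactly what fails --- which is why the theorem is \emph{not} provable from $\MA(\aleph_1)$ and why properness cannot be a soft fact about $\mathbb{P}$. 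You also misattribute the method: \cite{ARS} do not prove properness of a non-ccc poset; they arrange (by preparatory forcing, or under $\PFA$ by first harvesting SOCA-type homogeneity consequences) that the single-instance decomposition poset \emph{is} ccc, and then apply $\MA(\aleph_1)$. A correct write-up must either supply a genuine properness argument for $\mathbb{P}$ (or a suitably modified poset), or follow the ccc route, or simply cite \cite{ARS} as the paper does. One further slip: the pieces of the partition should be the fibers $F^{-1}(j,i)$, not $F^{-1}(\{j\}\times\{0,1\})$; nothing in your poset forbids using index $j$ with both colors, and the union of a $W_0$-connected set and a $W_1$-connected set need not be homogeneous.
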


The terminology $\OCAARS$ was used by Moore \cite{Mo} to distinguish
it from other flavors of the Open Coloring Axiom in the literature.
Actually, \cite{ARS} does not mention PFA, but rather its Theorem 3.1 shows,
by iterated ccc forcing, that
$\OCAARS$ is consistent with $\MA(\aleph_1)$;
but the same proof shows that it is true under PFA.
In our proof of Theorem \ref{thm-c1-pfa}, we only need
$\MA(\aleph_1)$ plus $\OCAARS$,
so in fact every model
of $2^{\aleph_0} = \aleph_1  \wedge 2^{\aleph_1} = \aleph_2$
has a ccc extension satisfying the result of Theorem \ref{thm-c1-pfa}.

To use $\OCAARS$ for our version of 
Lemma \ref{lemma-cont-soca}, we need the $A_j$
of Theorem \ref{thm-oca-ars} to be clopen.
This is not always possible, but can be achieved if we shrink $E$:

\begin{lemma}
\label{lemma-polish-cont}
Assume $\MA(\aleph_1)$.  Assume that $X$ is a Polish space
and $E \in [X]^{\aleph_1}$.  For each $n \in \omega$, let
$\{A^n_j : j \in \omega\}$ partition $E$ into $\aleph_0$ sets.
Then there is a Cantor set $Q \subseteq X$ and, for each $n$,
a partition of $Q$ into disjoint relatively clopen sets
$\{K^n_j : j \in \omega\}$  such that
$|Q \cap E| = \aleph_1$ and
each $K^n_j \cap  E = A^n_j \cap Q$.
\end{lemma}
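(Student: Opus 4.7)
The plan is to apply $\MA(\aleph_1)$ to a ccc forcing poset $\PPP$ whose conditions are finite approximations to the desired Cantor tree, a committed portion of $E$, and the sought-after clopen partitions.

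A condition $p \in \PPP$ consists of: a finite complete binary subtree $T_p \subseteq 2^{\le h_p}$; for each $s \in T_p$ a basic open set $U^p_s$ from a fixed countable basis of $X$, of diameter at most $2^{-\lh(s)}$, with $\cl(U^p_{s \cat i}) \subseteq U^p_s$ and disjoint closures for siblings; a finite $F_p \subseteq E$ equipped with an assignment $\iota_p$ of each $x \in F_p$ to some leaf $s$ with $x \in U^p_s$; a finite $N_p \subseteq \omega$; and, for each $n \in N_p$, an integer $\ell^p_n \le h_p$ together with a partial labeling $\pi^p_n$ defined on a subset of the level-$\ell^p_n$ nodes of $T_p$, subject to $x \in A^n_{\pi^p_n(\iota_p(x) \rest \ell^p_n)}$ for every $x \in F_p$. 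Extensions preserve all this data while permitting the tree to grow, new partitions to become active, and more points to be committed.

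CCC follows by a standard $\Delta$-system argument: the tree shape, basic opens, levels, and labelings range over countable data, so among $\aleph_1$-many conditions one finds uncountably many sharing all such data and differing only in $F_p$; any two are amalgamated by splitting the leaves holding distinct committed points into smaller disjoint basic opens separating those points, which preserves the labels because the partition levels $\ell^p_n$ never exceed the existing tree height. The key families of dense sets are (a) $\{p : h_p \ge h\}$ for each $h \in \omega$, (b) $\{p : n \in N_p\}$ for each $n \in \omega$, obtained by setting $\ell^p_n = h_p$ and declaring $\pi^p_n(t)$ to be the unique $j$ with $e \in A^n_j$ for any committed $e$ assigned below $t$, and (c) for each $x \in E$, the set $\{p : x \in F_p \text{ or } x \notin U^p_s \text{ for every leaf } s \text{ of } T_p\}$, which is dense since one may either capture $x$ at a suitable leaf (defining a new label where necessary) or shrink the basic opens at the offending leaves to exclude $x$.

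Before invoking $\MA$, I fix a base condition $p_0$ whose root $U^{p_0}_\emptyset$ satisfies $|U^{p_0}_\emptyset \cap E| = \aleph_1$, which exists by pigeonhole since the countable basis of $X$ covers $E$ and $|E| = \aleph_1$. Working in $\PPP \rest p_0$, I adjoin $\aleph_1$-many capture dense sets $D_x = \{q \le p_0 : x \in F_q\}$ for $x \in U^{p_0}_\emptyset \cap E$; each $D_x$ is dense below $p_0$ because, given $q \le p_0$ not capturing $x$, one may split a leaf whose open set contains $x$ and assign $x$ to a new leaf whose level-$\ell^q_n$ prefixes are either previously unused (letting $\pi^q_n$ be defined there to equal $j_n(x)$) or already compatible with $j_n(x)$. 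Applying $\MA(\aleph_1)$ produces a filter $G$ meeting all these dense sets with $|F_G| = \aleph_1$; then $Q$ is the set of points of $X$ through which a full branch of the generic tree passes, the committed points exhaust $E \cap Q$ by (c), and the coherent labelings $\pi_n$ yield the clopen partitions $K^n_j = \{y \in Q : \pi_n(\text{level-}\ell_n \text{ prefix of } y) = j\}$ (filling undefined values arbitrarily, since the corresponding cones contain no points of $E$) satisfying $K^n_j \cap E = A^n_j \cap Q$. The main obstacle I anticipate is the density of the $D_x$: the tree-splitting operation must simultaneously respect all already-chosen labels for active partitions and route $x$ into a leaf whose type matches, and it is precisely this concern that forces the labelings $\pi^p_n$ to be kept partial rather than totally declared at each stage.
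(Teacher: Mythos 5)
Your overall architecture---finite tree approximations by basic open sets of shrinking diameter with disjoint closures, a finite committed subset of $E$ as a side condition, and labelings recording which cell $A^n_j$ each node is reserved for---is essentially the poset the paper uses, and your ccc argument (conditions agreeing on all the countable data are compatible, so the poset is in fact $\sigma$-centered) is sound. The genuine gap is exactly the one you flag at the end: the sets $D_x=\{q\le p_0: x\in F_q\}$ need not be dense. Given $q$, the point $x$ may lie in no leaf's open set (it may already have been excluded via your family (c)), or it may lie only under leaves $s$ whose prefix $s\rest\ell^q_n$ already carries a label $j$ with $x\notin A^n_j$; splitting a leaf only produces children with the \emph{same} level-$\ell^q_n$ prefix, since $\ell^q_n\le h_q$, so no amount of further splitting can reroute $x$ to a compatibly labeled position. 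Hence $\MA(\aleph_1)$ cannot be applied to the family $\{D_x\}$, and nothing in your construction prevents a condition from committing to labels compatible with only countably many points of $E$ (for instance, labeling every active node with a countable cell), in which case $Q\cap E$ would be countable.

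The paper avoids this by not trying to capture prescribed points. It first shrinks $E$ and $X$ so that for every basic open $U$ and every finite intersection $A_s=\bigcap_{n<\lh(s)}A^n_{s(n)}$, the set $A_s\cap U$ is either empty or of size $\aleph_1$; it requires every node $U$ at level $\ell$ to carry a full label $s\in\omega^\ell$ with $A_s\cap U\ne\emptyset$ (rather than a partial labeling activated later); and it replaces your $D_x$ by the dense sets $\{p:\exists\beta>\alpha\,[e_\beta\in I_p]\}$ for $\alpha<\omega_1$, where $E=\{e_\beta:\beta<\omega_1\}$. These are dense because any top-level node $U$ with label $s$ satisfies $|A_s\cap U|=\aleph_1$, so some $e_\beta$ with $\beta>\alpha$ can be committed there consistently with all labels. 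Adding this preliminary shrinking and the nonemptiness requirement on labels, and dropping the $D_x$ in favor of these cofinality dense sets, repairs your argument; the rest of your proposal (the Cantor set $Q$, the clopen sets $K^n_j$ read off from the labels, and the identity $K^n_j\cap E=A^n_j\cap Q$ via family (c)) then goes through as in the paper.
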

\begin{proof}
Note that for each $n$, compactness of $Q$ implies that all but finitely
many of the $K^n_j$ will be empty.

For $s \in \omega^{< \omega}$, let $A_s = \bigcap\{A^n_{s(n)} : n < \lh(s)\}$,
with  $A_\emptyset = E$.
Shrinking $E,X$, we may assume that whenever $U \subseteq X$
is open and non-empty, $|E \cap U| = \aleph_1$
and each $|A_s \cap U|$ is either $0$ or $\aleph_1$.

Let $\BB$ be a countable open base for $X$, with $X \in \BB$.
Call $\TT$ a \emph{nice tree} iff:
\begin{itemizz}
\item[1.] $\TT$ is a non-empty subset of
$\BB \backslash \{\emptyset\}$ which is a tree under
the order $\subset$, with root node $X$.
\item[2.] $\TT$ has height $\hgt(\TT)$,
where $1 \le \hgt(\TT) \le \omega$. 
\item[3.] If $U \in \TT$ is at level $\ell$ with $\ell + 1  < \hgt(\TT)$, then
$U$ has finitely many but at least two children in $\TT$, and the closures
of the children are pairwise disjoint and contained in $U$.
\item[4.] If $U \in \TT$ is at level $\ell > 0$,
then $\diam(U) \le 1/\ell$.
\end{itemizz}
This labels the levels $0,1,2,\ldots$, with
$\hgt(\TT)$ the first empty level.
Let $L_\ell(\TT)$ be the set of nodes at level $\ell$.
By (1)--(3), each $L_\ell(\TT)$ is a finite pairwise disjoint collection.

When $\hgt(\TT) = \omega$, let $Q_\TT  =
\bigcap_{\ell\in \omega} \bigcup L_\ell(\TT) =
\bigcap_{\ell\in \omega} \cl( \bigcup L_\ell(\TT))$.
Then $Q_\TT$ is a Cantor set,
so it is natural to force with finite trees approximating $\TT$.
Since many Cantor sets are disjoint from $E$, each forcing condition
$p$ will have, as a side condition, a finite $I_p \subseteq E$
which is forced to be a subset of $Q$.

Define $p \in \PPP$ iff $p$ is a triple
$(\TT, I, \varphi) = (\TT_p, I_p, \varphi_p)$,
such that:
\begin{itemizz}
\item[a.] $\TT$ is a nice tree of some finite height $h = h_p \ge 1$.
\item[b.] $I$ is finite and $I \subseteq E \cap \bigcup L_{h-1}(\TT)$.
\item[c.] $\varphi : \TT \to \omega^{< \omega}$ with
$\varphi(U) \in \omega^\ell$ for $U \in L_\ell(\TT)$.
\item[d.]
$\varphi(V) \supseteq \varphi(U)$ whenever $V \subseteq U$.
\item[e.] If $s = \varphi(U)$ then $A_s \cap U \ne \emptyset$
and $I_p \subseteq A_s$.
\end{itemizz}

Define $q \le p$ iff $\TT_q$ is an end extension of $\TT_p$
and $I_q \supseteq I_p$ and $\varphi_q \supseteq \varphi_p$.
Then $\one = (\{X\}, \emptyset, \{(X, \emptyset)\} )$.
$\PPP$ is ccc (and $\sigma$--centered) because
$p,q$ are compatible whenever $\TT_p = \TT_q$ and $\varphi_p = \varphi_q$.
If $G$ is a filter meeting the dense sets $\{p : h_p > n\}$ for each $n$,
then $G$ defines a tree $\TT = \TT_G = \bigcup \{\TT_p : p \in G\}$
of height $\omega$, and $Q = Q_\TT$ is a Cantor set.
We also have $\varphi_G = \bigcup \{\varphi_p : p \in G\}$,
so $\varphi_G : \TT_G \to \omega^{< \omega}$;
also, let $I_G = \bigcup\{I_p : p \in G\}$.

Note that for each $x \in E$,
$\{p : x \in I_p \vee x \notin \bigcup L_{h_p-1}(\TT_p)\}$ is dense in $\PPP$.
If $G$ meets all these dense sets, then
$Q \cap E = I_G$.  We may then let $K^n_j = Q \cap
\bigcup\{U \in L_{n+1}(\TT_G) : \varphi(U)(n) = j\}$.

Finally, if we list $E$ as $\{e_\beta : \beta < \omega_1\}$,
note that each set $\{p : \exists \beta > \alpha \, [e_\beta \in I_p]\}$
is dense, so that we may force $Q \cap E$ to be uncountable.
\end{proof}

\begin{lemma}
\label{lemma-oca-cont}
Assume $\PFA$.  Assume that $X$ is a Polish space,
$F$ is a compact metric space, $E \in [X]^{\aleph_1}$,
$g \in C(X^\dag, F)$, and $g(x,y) = g(y,x)$ whenever $x \ne y$.
Then there is a Cantor set $Q \subseteq X$ such that
$|Q \cap E| = \aleph_1$ and $g \res Q^\dag$ extends continuously
to some $\hat g \in C(Q \times Q, F)$.
\end{lemma}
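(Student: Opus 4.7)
The plan is to combine $\OCAARS$ (Theorem \ref{thm-oca-ars}) with the Cantor-set refinement of Lemma \ref{lemma-polish-cont} (available since $\PFA$ implies $\MA(\aleph_1)$), and then mimic the diagonal-extension argument from Lemma \ref{lemma-cont-soca}. First, for each $n \in \omega$ I would cover the compact metric space $F$ by finitely many open balls of radius $2^{-n-2}$, pull them back by $g$, and intersect with $E^\dag$ to obtain a finite symmetric open cover $\{W^n_i : i < \ell_n\}$ of $E^\dag$ with $\diam(g(W^n_i)) \le 2^{-n-1}$. Applying $\OCAARS$ to each such cover yields a countable partition $\{A^n_j : j \in \omega\}$ of $E$ with each $A^n_j$ being $W^n_{i(n,j)}$--connected for some $i(n,j)$; equivalently, $g \res (A^n_j)^\dag$ has image of diameter at most $2^{-n-1}$.

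Next I would feed the countable family $\{A^n_j\}_{n,j \in \omega}$ into Lemma \ref{lemma-polish-cont} to obtain a Cantor set $Q \subseteq X$ with $|Q \cap E| = \aleph_1$ and, for each $n$, a relatively clopen partition $\{K^n_j : j \in \omega\}$ of $Q$ with $K^n_j \cap E = A^n_j \cap Q$. A routine addition of density requirements to the forcing $\PPP$ of that lemma (asking, for each basic open $U \in \BB$ meeting $\TT_G$ cofinally, that $U \cap I_p \ne \emptyset$ for some $p \in G$) also makes $Q \cap E$ dense in $Q$. For $x \in Q$ and $m \in \omega$, let $j(m,x)$ be the unique index with $x \in K^m_{j(m,x)}$, and set $V_m(x) = \bigcap_{n \le m} K^n_{j(n,x)}$, a decreasing sequence of clopen neighborhoods of $x$ in $Q$ with $V_m(x) \cap E \subseteq A^m_{j(m,x)}$.

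Finally I would set $\hat g = g$ on $Q^\dag$ and, on the diagonal, $\hat g(x,x) = \lim_m g(u_m, v_m)$ for any choice of $(u_m, v_m) \in V_m(x)^\dag$. The limit exists and is sequence-independent because density of $V_m(x) \cap E$ in $V_m(x)$ plus continuity of $g$ on $X^\dag$ forces $g(V_m(x)^\dag)$ to lie in the closure of $g((A^m_{j(m,x)})^\dag)$, a set of diameter at most $2^{-m-1}$, giving a nested Cauchy sequence of shrinking subsets of $F$. The main obstacle is continuity of $\hat g$ at diagonal points $(x,x)$; I would verify it by a two-case $\varepsilon$ argument. Given $\varepsilon > 0$, choose $m$ with $2^{-m} < \varepsilon$: on the clopen neighborhood $V_m(x) \times V_m(x)$ of $(x,x)$ in $Q \times Q$, every off-diagonal value $\hat g(y,z) = g(y,z)$ lies in the small image $g(V_m(x)^\dag)$, while every on-diagonal value $\hat g(y,y)$ with $y \in V_m(x)$ was defined as a limit of such values (using the neighborhoods $V_{m'}(y) \subseteq V_m(x)$ for $m' \ge m$), so both are within $\varepsilon$ of $\hat g(x,x)$. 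Continuity at off-diagonal points is immediate from continuity of $g$.
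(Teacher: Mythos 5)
Your proposal is correct and follows essentially the same route as the paper: cover $X^\dag$ (via compactness of $F$) by finitely many symmetric open sets with small $g$-image, apply $\OCAARS$ to get the partitions $\{A^n_j\}$, pass to the Cantor set $Q$ with matching clopen traces via Lemma \ref{lemma-polish-cont}, arrange $Q\cap E$ dense in $Q$, and define $\hat g$ on the diagonal as the unique point of the nested small closures. The only cosmetic differences are that you secure density by adding dense sets to the forcing rather than by shrinking $Q$ afterwards, and you intersect the clopen pieces $K^n_j$ up to level $m$ where the paper uses a single $K^m_j$; both are immaterial.
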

\begin{proof}
For each $n$, we may use compactness of $F$ to cover $X^\dag$ by
finitely many open sets, $W^n_i = (W^n_i)\iv$ for $i < \ell_n$,
such that each $\diam(g(W^n_i)) \le 2^{-n}$.
It follows by Theorem \ref{thm-oca-ars} that for each $n$,
we may partition $E$ into sets 
$\{A^n_j : j \in \omega\}$ such that each $A^n_j$ is
$W^n_i$--connected for some $i$,
so that $\diam(g((A_j^n)^\dag)) \le 2^{-n}$.

By Lemma \ref{lemma-polish-cont},  we have
a Cantor set $Q \subseteq X$ and, for each $n$,
a partition of $Q$ into disjoint relatively clopen sets
$\{K^n_j : j \in \omega\}$  such that
$|Q \cap E| = \aleph_1$ and
each $K^n_j \cap  E = A^n_j \cap Q$.
Shrinking $Q$, we may assume $Q \cap E$ is dense in $Q$,
so that each $A^n_j \cap Q$ is dense in $K^n_j$
and $\diam(g((K^n_j)^\dag)) \le 2^{-n}$.

Now, fix $x \in Q$.  For each $n$, $x$ lies in exactly one of the $K^n_j$,
and we may let $H^n= \cl(g( (K^n_j)^\dag)) $ for that $j$.
Then $\bigcap_n H^n$ is a singleton, and we may define
$\hat g$ on the diagonal by
$ \{ \hat g(x,x) \} = \bigcap_n H^n$.  It is easily seen that this $\hat g$ 
is continuous on $Q \times Q$.
\end{proof}

\section{Proofs of Positive Results}
\label{sec-posit}

\begin{lemma}
\label{lemma-dir}
Fix an uncountable $E \subseteq \RRR^n$  and an $\varepsilon > 0$.
Assuming $\SOCA$, there is an uncountable $T \subseteq E$ such
that $T$ is $\varepsilon$--directed.
In the case that $E$ is Polish, this is a $\ZFC$ result
and $T$ can be made perfect.
\end{lemma}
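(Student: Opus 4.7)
The plan is to reduce the lemma to an application of the open-covering form of SOCA, namely Lemma \ref{lemma-soca-cover}, by constructing a finite symmetric open cover of $E^\dag$ each of whose pieces forces $\varepsilon$--directedness on any connected subset.

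Since $S^{n-1}$ is compact, I fix finitely many points $v_0,\ldots,v_{\ell-1} \in S^{n-1}$ such that the open balls $B(v_i,\varepsilon/2)$ cover $S^{n-1}$. For each $i < \ell$, set
$$W_i = \{(x,y) \in E^\dag : \|\rho(x-y) - v_i\| < \varepsilon/2 \text{ or } \|\rho(x-y) + v_i\| < \varepsilon/2\}.$$
The identity $\rho(y-x) = -\rho(x-y)$ gives $W_i = W_i\iv$, and since $(x,y) \mapsto \rho(x-y)$ is continuous on $E^\dag$ each $W_i$ is open in $E^\dag$. For any $(x,y) \in E^\dag$ the unit vector $\rho(x-y)$ lies in some $B(v_i,\varepsilon/2)$, so $E^\dag = \bigcup_{i<\ell} W_i$.

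Applying Lemma \ref{lemma-soca-cover} under SOCA yields an uncountable $T \subseteq E$ that is $W_i$--connected for some $i$. Then for every pair of distinct points $x,y \in T$, one of $\|\rho(x-y) \pm v_i\| < \varepsilon/2 \le \varepsilon$ holds, so $T$ is $\varepsilon$--directed with witnessing direction $v = v_i$. In the case that $E$ is Polish, Lemma \ref{lemma-soca-cover} is a ZFC result producing a perfect such $T$, which handles the second half of the statement.

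The only point requiring care is building the cover symmetrically: the antipodal option $\pm v_i$ in the definition of $W_i$ is exactly what forces $W_i = W_i\iv$ and simultaneously matches the two-sided clause (``$\rho(x-y)$ near $v$ or near $-v$'') in the definition of $\varepsilon$--directedness. Beyond this bookkeeping, the argument is essentially a direct appeal to the covering lemma and presents no further obstacle.
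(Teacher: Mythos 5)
Your proof is correct and follows essentially the same route as the paper's: cover $S^{n-1}$ by finitely many small sets, pull this back to a finite symmetric open cover of $E^\dag$, and apply Lemma \ref{lemma-soca-cover}. Your explicit symmetrization via the antipodal pair $\pm v_i$ is just a slightly more careful variant of the paper's one-line argument, which takes $W_i = \{(x,y) \in E^\dag : \rho(x-y) \in V_i\}$ for a cover $\{V_i : i<\ell\}$ of $S^{n-1}$ by sets of diameter less than $\varepsilon$.
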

\begin{proof}
Let $\{V_i : i < \ell\}$ be an open cover of $S^{n-1}$ by sets
of diameter less than $\varepsilon$, and apply Lemma \ref{lemma-soca-cover}
with $W_i = \{ (x,y) \in E^\dag : \rho(x-y) \in V_i\}$.
\end{proof}

\begin{proofof}{Theorem \ref{thm-c1}}
Applying Lemma \ref{lemma-dir} and
shrinking $P$, we may assume that $P$ is
a Cantor set and that $P$ is $2 \sin(22.5^\circ)$--directed;
so, the direction between any
two points of $P$ is within $45^\circ$ of some fixed direction.
Rotating coordinates, we may assume that this fixed direction is along
the $x$-axis, where we label our $n$ axes as $x, y^1, \ldots, y^{n-1}$.
Now, $P$ is (the graph of) a function which expresses
$(y^1, \ldots, y^{n-1})$ as a function of $x$,
and $D := \dom(P)$ is a Cantor set.
Write $P(x)$ as $(P^1(x), \ldots, P^{n-1}(x))$.

The $xy^i$-planar slopes of $P$ are all in $[-1, 1]$.  That is,
for $x_1,x_2 \in D$ with $x_1 \ne x_2$, let
$g^i(x_1,x_2) = (P^i(x_2) - P^i(x_1)) / (x_2 - x_1)$;
then $|g^i(x_1, x_2)| \le 1$ for all $x_1,x_2$.
Each $g^i \in C(D^\dag, [0,1])$ and
$g^i(x_1,x_2) = g^i(x_2,x_1)$ whenever $x_1 \ne x_2$.
Applying Lemma \ref{lemma-cont-soca} with $F = [0,1]^{n-1}$ 
and shrinking $D$ if necessary, we may assume that each $g^i$
extends continuously to some $\hat g^i \in C(D\times D, [0,1])$.
Let $h^i(x) = \hat g^i (x,x)$.  Then $h^i$ is the derivative of $P^i$
in the strong sense.
Now, we may apply Lemma \ref{lemma-interp} on each coordinate
separately to obtain a $C^1$ arc $A \supseteq P$;  $A$ is 
the graph of a $C^1$ function $x \mapsto (A^1(x), \ldots, A^{n-1}(x))$
defined on an interval containing $D$.
\end{proofof}

\begin{proofof}{Theorem \ref{thm-c1-pfa}}
Given Lemma \ref{lemma-oca-cont}, the proof is almost identical
to the proof of Theorem \ref{thm-c1}.
\end{proofof}

When $E \subseteq \RRR^n$ has size exactly $\aleph_1$,
and the Question of Section \ref{sec-intro} has a positive answer,
it is natural to ask whether $E$ can
be covered by $\aleph_0$ ``nice'' arcs.
For example, under $\MA(\aleph_1)$, $E$ is covered
by $\aleph_0$ Cantor sets, and hence by $\aleph_0$ arcs.
One can also improve Theorem \ref{thm-c1-pfa}:

\begin{theorem}
PFA implies every $E \subseteq \RRR^n$ of size $\aleph_1$
can be covered  by $\aleph_0$ $\; C^1$  arcs.
\end{theorem}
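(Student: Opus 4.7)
The plan is to re-run the proof of Theorem \ref{thm-c1-pfa}, but at each stage replace the construction of a single Cantor set hitting $E$ in an uncountable set by a countable cover of $E$ by Cantor sets. First, applying $\OCAARS$ (Theorem \ref{thm-oca-ars}) directly to the direction coloring of Lemma \ref{lemma-dir}, we immediately get a countable partition $E = \bigsqcup_j E_j$ with each $E_j$ being $\varepsilon$--directed, where $\varepsilon = 2\sin(22.5^\circ)$: the open cover $\{V_i : i < \ell\}$ of $S^{n-1}$ by symmetric balls of diameter $< \varepsilon$ yields the finite cover $W_i = \{(x,y) \in E^\dag : \rho(x-y) \in V_i\}$ of $E^\dag$, and the $W_i$--connected pieces in the $\OCAARS$ partition are exactly $\varepsilon$--directed. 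It thus suffices to cover each $E_j$ by $\aleph_0$ $C^1$ arcs, so fix such an $E_j$ and rotate coordinates so that $E_j$ is the graph of a Lipschitz map $P:D'\to \RRR^{n-1}$, with slope functions $g^i(x_1,x_2)=(P^i(x_2)-P^i(x_1))/(x_2-x_1)\in[-1,1]$.

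The crux of the proof is to further partition $E_j$ into countably many sub-pieces on each of which every $g^i$ extends continuously to the diagonal; then Lemma \ref{lemma-interp}, applied coordinate-wise, produces a $C^1$ arc containing each sub-piece. This requires an analog of Lemma \ref{lemma-oca-cont} yielding a countable cover rather than a single Cantor set, which in turn reduces to the following strengthening of Lemma \ref{lemma-polish-cont}: under $\PFA$, given countable partitions $\{A^n_j : j \in \omega\}$ of $E \in [X]^{\aleph_1}$ for each $n \in \omega$, there exist countably many Cantor sets $\{Q_m : m \in \omega\}$ in $X$ with $E \subseteq \bigcup_m Q_m$ and, for each $m$ and $n$, a relatively clopen partition $\{K^{m,n}_j : j \in \omega\}$ of $Q_m$ satisfying $K^{m,n}_j \cap E = A^n_j \cap Q_m$. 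Given this, applying $\OCAARS$ to $E_j$ at successive dyadic slope-scales produces the partitions $\{A^n_j\}$, and running the argument of Lemma \ref{lemma-oca-cont} over each $Q_m$ separately yields the desired countable partition of $E_j$.

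The main obstacle is this strengthening of Lemma \ref{lemma-polish-cont}. Its proof in the text shrinks both $E$ and $X$ so that non-empty open subsets of $X$ meet $E$ in $\aleph_1$ points, and then generically builds a single Cantor set from a tree of clopen boxes; but $E$ need not be contained in any single such Cantor set. To cover $E$, one would enrich the forcing by a countable family of trees rather than a single tree, the side conditions now forcing every point of $E$ to lie on some branch of some tree, while still coherently labeling the nodes of each tree at each dyadic level. Verifying that the resulting poset remains proper, so that $\PFA$ can be invoked, and that the resulting clopen partitions at each scale survive the absorption of all of $E$, is where the real work of the argument lies.
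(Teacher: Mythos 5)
Your approach is essentially the paper's: partition $E$ into countably many $\varepsilon$--directed pieces via $\OCAARS$, and then prove covering versions of Lemmas \ref{lemma-polish-cont} and \ref{lemma-oca-cont} that produce countably many Cantor sets $Q_\ell$ with $E \subseteq \bigcup_\ell Q_\ell$. The one step you flag as ``where the real work lies'' is, however, easier than you suggest, and your instinct to worry about properness points in the wrong direction. The poset $\PPP$ of Lemma \ref{lemma-polish-cont} is $\sigma$--centered (conditions with the same $\TT_p$ and $\varphi_p$ are compatible), so the finite support product of $\omega$ copies of $\PPP$ is again $\sigma$--centered, hence ccc, and $\MA(\aleph_1)$ --- not the full strength of $\PFA$ --- already yields a filter meeting the relevant dense sets; for each $x \in E$ one uses the dense set of conditions that either place $x$ in some $I_{p_\ell}$ or exclude it from the bottom level of every tree in the (finite) support, extending into a fresh coordinate if necessary. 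Two bookkeeping points you omit: the initial shrinking of $E$ in Lemma \ref{lemma-polish-cont} (to make every nonempty open set meet $E$ and each $A_s$ in $0$ or $\aleph_1$ points) and the shrinking of each $Q_\ell$ in Lemma \ref{lemma-oca-cont} (to make $Q_\ell \cap E$ dense in $Q_\ell$) each discard only countably many points of $E$, and these leftover points are absorbed by adding $\aleph_0$ straight lines to the cover. With those two observations your outline closes up into the paper's argument.
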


The proof mimics the proof of Theorem \ref{thm-c1-pfa},
but uses improved versions of Lemmas \ref{lemma-dir},
\ref{lemma-polish-cont} and \ref{lemma-oca-cont}.
The new and improved Lemma \ref{lemma-dir} gets
$E$ covered by $\aleph_0$ $\varepsilon$--directed sets,
using Theorem \ref{thm-oca-ars} rather than SOCA.

The covering versions of Lemmas \ref{lemma-polish-cont}
and \ref{lemma-oca-cont} 
get Cantor sets $Q_\ell \subseteq X$ for $\ell \in \omega$ 
satisfying the conditions of the lemmas and
so that $E \subseteq \bigcup_\ell Q_\ell$.
To get the $Q_\ell$ for $\ell \in \omega$,
force with the finite support product of $\omega$
copies of the poset $\PPP$ described in the proof 
of Lemma \ref{lemma-polish-cont}.
Then, use the $Q_\ell$ to prove the covering version of
Lemma \ref{lemma-oca-cont}.
Even though the proof  of Lemma \ref{lemma-oca-cont}
shrinks $Q$, it does so by deleting at most countably many points from $E$,
so these points may be covered by $\aleph_0$ straight lines.
Thus, $E$ will be covered by $\bigcup_\ell Q_\ell$ together with
a countable union of lines.

\section{Proofs of Negative Results}
\label{sec-neg}

\begin{lemma}
\label{lemma-cantor-zero}
Let $D \subset \RRR$ be closed.
Then there is an $h \in C^\infty(\RRR)$ such that $h(x) \ge 0$ for all
$x$ and $D = \{x \in \RRR : h(x) = 0\}$.
\end{lemma}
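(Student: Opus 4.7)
The plan is to build $h$ as a weighted sum of $C^\infty$ bumps, one for each connected component of the complement of $D$. Since $\RRR\setminus D$ is open, we may write it as a countable disjoint union $\bigcup_n J_n$ of open intervals (possibly including one or two unbounded components). For each $n$, I will construct a function $\psi_n \in C^\infty(\RRR)$ with $\psi_n \ge 0$ and $\{x : \psi_n(x) > 0\} = J_n$, and then set $h = \sum_n c_n \psi_n$ for suitable scalars $c_n > 0$.

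For the bumps themselves I would use the classical exponential trick. If $J_n = (a_n, b_n)$ is bounded, put
\[
\psi_n(x) = \exp\!\bigl(-1/\bigl((x-a_n)(b_n - x)\bigr)\bigr)
\qquad \text{for } x \in J_n,
\]
and $\psi_n(x) = 0$ otherwise; a standard computation shows $\psi_n \in C^\infty(\RRR)$ with all derivatives vanishing at $a_n$ and $b_n$. If $J_n = (a_n,\infty)$, use $\psi_n(x) = \exp(-1/(x-a_n))$ for $x > a_n$ and $0$ otherwise; the case $(-\infty, b_n)$ is analogous. In each case $\psi_n$ is nonnegative, its zero set is $\RRR \setminus J_n$, and $\psi_n$ together with all its derivatives is bounded on $\RRR$ (the exponential factor kills the polynomial $1/(x-a_n)$ terms at the endpoints and remains bounded by $1$ at infinity).

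The convergence step is where the only care is needed. Set
\[
M_n \;=\; 1 + \max_{0 \le k \le n}\ \sup_{x \in \RRR}\,\bigl|\psi_n^{(k)}(x)\bigr|,
\qquad
c_n \;=\; 2^{-n}/M_n.
\]
For every fixed $k$, the series $\sum_{n \ge k} c_n \psi_n^{(k)}$ is majorized term-by-term by $\sum_{n \ge k} 2^{-n}$, so it converges uniformly and absolutely on $\RRR$. A standard induction (or repeated application of the theorem on termwise differentiation of uniformly convergent series) then shows $h := \sum_n c_n \psi_n$ lies in $C^\infty(\RRR)$ with $h^{(k)} = \sum_n c_n \psi_n^{(k)}$. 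Clearly $h \ge 0$; and for $x \in \RRR$, either $x \in D$ (whence every $\psi_n(x) = 0$ and so $h(x) = 0$) or $x \in J_m$ for the unique $m$ containing $x$ (whence $h(x) \ge c_m \psi_m(x) > 0$). The trivial cases $D = \RRR$ and $D = \emptyset$ are handled by $h \equiv 0$ and $h \equiv 1$ respectively.

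The only nontrivial ingredient is arranging $C^\infty$ convergence of the sum, and the choice $c_n = 2^{-n}/M_n$ handles this cleanly; everything else is bookkeeping on standard bump-function estimates.
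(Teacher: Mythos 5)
Your proof is correct and follows essentially the same route as the paper's: cover $\RRR\setminus D$ by open intervals, place the classical exponential bump on each, and sum with coefficients small enough that every derivative series converges uniformly. The only differences are cosmetic — you make the choice of $c_n$ explicit and treat the unbounded components separately, whereas the paper covers the complement by bounded intervals and leaves the smallness of the $c_n$ implicit.
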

\begin{proof}
Let $U = \RRR \backslash D$; we shall call our function $h_U$.
If $U = (a,b)$, then such $h_U$ are in standard texts;
for example,
let $h_{(a, b)} (x)$ be $\exp(-1 \div (x-a)(b-x) )$
for $x \in (a,b)$ and $0$ otherwise.
Now, say $U = \bigcup_{n \in \omega} J_n$, where each $J_n$
is a bounded open interval.  Let
$h_U = \sum_{n\in\omega} c_n h_{J_n}$, where each $c_n > 0$ and the
$c_n$ are small enough so that for each $\ell \in \omega$,
the $\ell^{\mathrm{th}}$ derivative $h^{(\ell)}_U$ is the uniform limit of
the sum $\sum_{n\in\omega} c_n h^{(\ell)}_{J_n}$. 
\end{proof}

\begin{proofof}{Theorem \ref{thm-squig}}
Let $D \subset \RRR$ be a Cantor set. 
Integrating the function of Lemma \ref{lemma-cantor-zero},
fix $f \in C^\infty(\RRR)$ such that $f'(x) \ge 0$ for all
$x$ and $D = \{x \in \RRR : f'(x) = 0\}$.  Then $f$ is strictly increasing.

Let $P$ be the graph of $f \res D$.  Fix an uncountable $A \subseteq P$,
and assume that $A$ is non-squiggly; we shall derive a contradiction.
Fix $\delta > 0$ as in Definition \ref{def-squig};
then, shrinking $A$, we may assume that $\diam(A) \le \delta$
so that whenever $\{x,y,z,t\} \in [A]^4$,
point $t$ is not interior to triangle $xyz$.

Let $S$ be an infinite subset of $\dom(A)$ such that every
point of $S$ is a limit, from the left and right, of other points of $S$.

Now, fix $a,b,c \in S$ with $a < b < c$; then $f(a) < f(b) < f(c)$.
Let $L$ be the straight line passing through $(a,f(a))$ and $(c,f(c))$.
Moving $b$ slightly if necessary, we may assume (since $f'(b) = 0$)
that $L$ does
not pass through $(b, f(b))$.  Then either $L(b) > f(b)$ or $L(b) < f(b)$.

Suppose that $L(b) > f(b)$.  Consider triangle $(a,f(a)), (b, f(b)), (c, f(c))$.
One leg of this triangle is the graph of $L \res [a,c]$, which passes
above the point $(b,f(b))$.  Since all three legs have positive slope
and $f'(b) = 0$, the points
$(b - \varepsilon, f(b - \varepsilon)) $
are interior to the triangle when $\varepsilon > 0$ is small enough.
Choosing such an $\varepsilon$ with $b - \varepsilon \in S$ yields 
a contradiction.

$L(b) < f(b)$ is likewise contradictory,
using points $(b + \varepsilon, f(b + \varepsilon)) $.
\end{proofof}

Observe that the arc in Theorem \ref{thm-squig}
cannot be real-analytic, since if $f : [0,1] \to \RRR$ is real-analytic,
then $[0,1]$ can be decomposed into finitely many intervals
on which either $f'' \ge 0$ or $f'' \le 0$.  On each of these
intervals, the graph of $f$ is non-squiggly.

\begin{proofof}{Theorem \ref{thm-c2}}
As in the proof of Theorem \ref{thm-squig}, 
let $D \subset \RRR$ be a Cantor set,
and fix $f \in C^\infty(\RRR)$ such that
$f$ is strictly increasing,
 $f'(y) \ge 0$ for all $y$,  and $D = \{y \in \RRR : f'(y) = 0\}$.
Also, to simplify notation, assume that $f(\RRR) = \RRR$, so that
$\varphi := f\iv \in C(\RRR)$ and is also a strictly increasing function.
Let $K = f(D)$; so $K$ is also a Cantor set.  Then $\varphi$ is $C^\infty$ on
$\RRR \backslash K$, and $\varphi'(x) = +\infty$ for $x \in K$.
Integrating, fix $\psi \in C^1(\RRR)$ such that $\psi' = \varphi$;
so $\psi$ is a convex function.

Note that whenever $x \in K$ and $M > 0$, there is an $\varepsilon > 0$
such that
$\varphi'(u) \ge M$ whenever $|u - x| < \varepsilon$.   When
$x - \varepsilon < a \le v \le b < x + \varepsilon$, we can integrate this to get
$\varphi(a) + M(v-a) \le \varphi(v) \le \varphi(b) - M(b-v)$.  Integrating again yields
\[
(b - a) \varphi(a) + (b - a)^2 M/2
\le \psi(b) - \psi(a) \le
(b - a) \varphi(b) - (b - a)^2 M/2 \ \ .
\]
This implies that, for $x \in K$,
\[
\lim_{t \to 0} \frac{ (\psi(x + t) - \psi(x)) / t - \varphi(x)} {  t}  = +\infty \  \ ;
\tag{$\ast$}
\]
the argument can be broken into
two cases:  $t \searrow 0$ (consider $a = x < x+t = b$) and
$t \nearrow 0$ (consider $a = x + t < x = b$).

Now let $P = \psi \res K$; so $P$ is a Cantor set in $\RRR^2$.
Suppose that $P$ meets the $C^2$ arc $A$ in an infinite set.
Since the intersection is compact, it contains a limit point $(x_0, y_0)$.
At $(x_0, y_0)$, the tangent to the arc $A$ is parallel to the tangent
of the $C^1$ arc $y = \psi(x)$; in particular, this tangent is not vertical.
Thus, replacing $A$ by a segment thereof, we may assume that 
$A$ is the arc $y = \xi(x)$, where $\xi$ is a $C^2$ function defined
in some neighborhood of $x_0$.
Now  $y_0= \xi(x_0) = \psi(x_0)$ and
$\xi'(x_0)= \psi'(x_0)  = \varphi(x_0)$.
Also, since $(x_0, y_0)$ is a limit point of the intersection,
there are non-zero $t_k$, for $k \in \omega$,
converging to $0$, such that each $\psi(x_0 + t_k) = \xi(x_0 + t_k)$.
Applying Taylor's Theorem to $\xi$,
\[
\psi(x_0 + t_k) = \psi(x_0) + \varphi(x_0) t_k + \frac{1}{2}\xi''(z_k)t_k^2  
\text{\ \ for some $z_k$ between } x_0 \text{ and } x_0 + t_k
 \  \  .
\]
Since $\xi''(z_k) \to \xi''(x_0)$, we have
\[
\big[ (\psi(x_0 + t_k) - \psi(x_0)) / t_k - \varphi(x_0)  \big] / t_k \to \xi''(x_0)/2 \ \ ,
\]
contradicting $(\ast)$.
\end{proofof}

If $\psi$ were $C^2$,
the limit in $(\ast)$ would be $\psi''(x)/2\ne\infty$
(by Taylor's Theorem).
Moreover, the Cantor set $P=\psi\res K$ 
meets \textit{any} $C^2$ arc in a finite set.
This illustrates a difference between $C^1$ and $C^2$:  rotation
can cure an infinite derivative, but not an infinite second derivative.
Even though $\varphi'(x)=\infty$ for $x\in K$,
rotating
the graph of $\varphi\res K$ gives us the graph of $f\res D$,
which lies on a $C^\infty$ arc.

\section{Remarks on Arcs}
\label{sec-rmks}

Although the notion of \emph{strongly} $C^k$ is the one capturing the 
geometric notion of ``smooth'',  
every polygonal path is weakly $C^\infty$. 
Moreover, the standard formulas
for evaluating line integrals (e.g.,
$\int_A \vec\Phi(\vec x) \cdot d\vec x =
\int_a^b \vec\Phi(\vec g(t)) \cdot \vec g\,'(t) \, dt$) only require 
the path $\vec g(t)$ to be \emph{weakly} $C^1$; the arc $A$ may have corners,
with the velocity vector $\vec g'(t)$ becoming zero at a corner. 

Theorems \ref{thm-c1-pfa}, \ref{thm-c1}, and \ref{thm-squig}
produce \emph{strongly} $C^k$ arcs.
In contrast, Theorem \ref{thm-c2} produces a perfect set 
which \emph{meets} all strongly $C^2$ arcs in a finite set.
Theorem \ref{thm-meet} shows that
the \textit{weakly} version of this theorem is false. 

To prove Theorem \ref{thm-meet},  we begin with an interpolation result.

\begin{definition}
An \emph{interpolation function} is a $\psi \in C([0,1], [0,1] )$
such that $\psi(0) = 0$ and $\psi(1) = 1$.
\end{definition}

\begin{definition}
Assume that $D$ is a closed subset of $[0,1]$ with $0,1 \in D$.
Fix $g \in C(D, \RRR^n)$, and
let $\psi$ be an interpolation function.
Then the $\psi$ \emph{interpolation for} $g$ is the function
$\tilde g \in  C([0,1], \RRR^n)$  extending $g$ such that
whenever $(a,b) $ is a maximal
interval in $[0,1]\backslash D$ and $u \in (a,b)$,
\[
\tilde g(u) = g(a) + (g(b) - g(a)) \psi( (u-a)/(b - a) ) \ \ .
\]
\end{definition}
It is easily seen that $\tilde g$ is indeed continuous on $[0,1]$.

\begin{definition}
Assume that $D$ is a closed subset of $[0,1]$ with $0,1 \in D$.
Then $g \in C(D, \RRR^n)$ is \emph{flat} iff 
for all $\alpha \in \omega$,
there is a bound $M_\alpha$ such that 
for all $u,t \in D$
$\| g(u) - g(t) \| \le M_\alpha |u - t|^\alpha$.
\end{definition}
That is, $g$ is flat iff
for all $\alpha \in \NNN=\omega\setminus\{0\}$,
$g$ is uniformly Lipschitz of order $\alpha$ on $D$.
If $D$ is finite, then every $g : D \to \RRR^n$ is flat.
If $D$ contains an interval, then a flat $g$ is constant on that interval,
because it is Lipschitz of order 2 there;  for $t < t + h$ in the interval: 
$\| g(t + h) - g(t) \| \le k \cdot M_2 \cdot h^2 / k^2$ for all $k \ge 1$.

\begin{lemma}
\label{lem-interpD}
Assume that $D$ is a closed subset of $[0,1]$ with $0,1 \in D$.
Assume that $g \in C(D, \RRR^n)$ is flat.
Let $\psi$ be an interpolation function such that $\psi \in C^\infty([0,1], [0,1] )$ and
$\psi^{(k)}(0) = \psi^{(k)}(1) = 0$ for all $k \in \NNN$.  
Let $\tilde g$ be the $\psi$ interpolation for $g$.
Then $\tilde g \in C^{\infty}([0,1], \RRR^n)$ and $\tilde g^{(k)}(t) = 0$ for all
$t \in D$ and all $k  \in \NNN$.
\end{lemma}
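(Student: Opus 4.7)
The plan is to prove by induction on $k \in \omega$ that $\tilde g$ is $k$ times continuously differentiable on $[0,1]$, with $\tilde g^{(k)}$ vanishing identically on $D$ for every $k \ge 1$. The base case $k=0$ is just the continuity of $\tilde g$, already noted. On each maximal complementary interval $J = (a,b) \subseteq [0,1]\setminus D$, the chain rule shows that $\tilde g$ is $C^\infty$ on $J$ with
$$\tilde g^{(k)}(u) \;=\; \bigl(g(b)-g(a)\bigr)\, \psi^{(k)}\!\left(\tfrac{u-a}{b-a}\right) (b-a)^{-k},$$
so all the real work happens at points of $D$. Let $h_k : [0,1] \to \RRR^n$ be equal to this formula off $D$ and equal to $0$ on $D$. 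The inductive step then splits into: (A) $h_k$ is continuous on $[0,1]$; and (B) $\tilde g^{(k-1)}$ is differentiable at every $x \in D$ with derivative $h_k(x)=0$.

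For (A), continuity off $D$ is automatic, so fix $x \in D$ and any sequence $u_n \to x$ with $u_n \notin D$. If all but finitely many $u_n$ lie in a single complementary interval $(a,b)$, then $x \in \{a,b\}$ and $(u_n-a)/(b-a) \to 0$ or $1$; since by hypothesis every derivative of $\psi$ vanishes at $0$ and $1$, we get $h_k(u_n) \to 0$. Otherwise, after passing to a subsequence, the $u_n$ lie in pairwise distinct maximal intervals $(a_n, b_n) \subseteq [0,1]$; these being pairwise disjoint forces $\sum_n (b_n - a_n) \le 1$, hence $b_n - a_n \to 0$. Flatness of $g$ with $\alpha = k+1$ then gives
$$\|h_k(u_n)\| \;\le\; \|g(b_n)-g(a_n)\|\,\|\psi^{(k)}\|_\infty\,(b_n-a_n)^{-k} \;\le\; M_{k+1}\,\|\psi^{(k)}\|_\infty\,(b_n-a_n) \;\longrightarrow\; 0.$$

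For (B), fix $x \in D$ and consider $u > x$ (the case $u < x$ is symmetric). Set $v = \sup([x,u]\cap D) \in D$. If $v = u$ then $\tilde g^{(k-1)}(u) = 0$ by the inductive hypothesis; otherwise $u$ lies in the maximal complementary interval $(v, b)$, on which $\tilde g^{(k-1)}$ is smooth with derivative $h_k$ and continuous up to $v$, so the fundamental theorem of calculus yields $\tilde g^{(k-1)}(u) - \tilde g^{(k-1)}(v) = \int_v^u h_k(t)\,dt$. Using $\tilde g^{(k-1)}(v) = \tilde g^{(k-1)}(x) = 0$ from the inductive hypothesis,
$$\frac{\|\tilde g^{(k-1)}(u) - \tilde g^{(k-1)}(x)\|}{u-x} \;\le\; \frac{u-v}{u-x}\,\sup_{[v,u]}\|h_k\| \;\le\; \sup_{[v,u]}\|h_k\|,$$
which tends to $0$ as $u \to x^+$ (so $v \to x$) by (A) and $h_k(x)=0$. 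Hence $(\tilde g^{(k-1)})'(x)$ exists and equals $h_k(x) = 0$, closing the induction.

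The main technical obstacle is the wandering-sequence subcase of (A): the factor $(b_n - a_n)^{-k}$ produced by differentiating $\psi$ must be absorbed by the growth of $\|g(b_n) - g(a_n)\|$ as $b_n - a_n \to 0$, and this is exactly what flatness, applied with $\alpha > k$, is designed to provide. The vanishing of $\psi^{(k)}$ at the endpoints $0,1$ is what handles the orthogonal case of approaching $x$ from within a single fixed complementary interval.
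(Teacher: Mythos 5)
Your overall strategy is sound and genuinely different from the paper's: rather than establishing explicit quadratic decay bounds $\|\tilde g^{(k)}(u)\| \le B_k |u-t|^2$ near each $t \in D$ (which is what the paper does, via a two-subcase analysis depending on whether $(b-a)^2 \le u-a$, together with a Taylor bound on $\psi^{(k)}$ near the endpoints), you give a softer sequential argument for the continuity of the candidate derivative $h_k$ and then recover differentiability at points of $D$ from the fundamental theorem of calculus. Your step (A) is correct: the observation that pairwise distinct maximal complementary intervals have summable lengths, combined with flatness of order $k+1$, does exactly the right job, and the slightly loose dichotomy is repaired by the standard subsequence-of-a-subsequence argument.

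However, step (B) breaks at its base case $k=1$. There you invoke ``$\tilde g^{(k-1)}(v) = \tilde g^{(k-1)}(x) = 0$ from the inductive hypothesis,'' but for $k=1$ the function $\tilde g^{(0)} = \tilde g$ agrees with $g$ on $D$ and certainly does not vanish there; the inductive hypothesis at $k=1$ is only the continuity of $\tilde g$. Consequently your displayed inequality is false as written for $k=1$: the difference quotient $(\tilde g(u)-\tilde g(x))/(u-x)$ contains the extra term $(g(v)-g(x))/(u-x)$, which is not controlled by $\sup_{[v,u]}\|h_1\|$. This is not cosmetic: if $g$ were merely continuous, $\tilde g$ need not be differentiable at points of $D$ at all, so quantitative input on $g$ is required precisely here, and your (B) never uses flatness. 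The repair is to invoke flatness with $\alpha=2$: $\|g(v)-g(x)\| \le M_2(v-x)^2 \le M_2(u-x)^2$, so the extra term is $O(u-x)$ and tends to $0$ (similarly in the subcase $u \in D$, where the whole quotient is $(g(u)-g(x))/(u-x)$). This is exactly the role of the paper's Case II estimate $\|\tilde g(u)-\tilde g(t)\| \le \|\tilde g(u)-\tilde g(a)\| + \|g(a)-g(t)\|$ for its condition (1). With that one addition your induction closes, and your version in fact gets by with flatness of order $k+1$ where the paper uses order $k+4$.
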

\begin{proof}
It is sufficient to produce bounds $B_{k}$ giving
the following Lipschitz condition for all
$t \in D$ and $u \notin D$:
\begin{itemizz}
\item[1.]
$\| \tilde g(u) -  \tilde g(t)\| \le B_{0} |u - t|^2 $ .
\item[2.]
$\| \tilde g^{(k)}(u) \| \le B_{k} |u - t|^2 $  for $k \in\NNN$.
\end{itemizz}
Note that (1)(2) fail for $u,t \notin D$, since the derivatives there
need not be $0$. On the other hand, (1) holds for $u,t \in D$,
because $g$ is flat.

Observe that (1) and 2-Lipschitz on $D$ 
prove $\tilde g'(t)=0$ for  $t\in D$, so that  
(2) makes $\tilde g \in C^1([0,1], \RRR^n)$.
For $k \ge 2$, induct on $k$ 
to see that $\tilde g \in C^{(k)}([0,1], \RRR^n)$:
(2) for $k-1$ and 
the fact that $\tilde g^{(k-1)}$ is 2-Lipschitz on $D$
prove $\tilde g^{(k)}(t)=0$ for $t \in D$, 
so (2) for $k$ makes $g^{(k)}$ continuous.

To prove  (1)(2), assume, without loss of generality, $t < u$.  
To handle (1)(2) together,
let $Q_0(u,t)= \| \tilde g(u) -  \tilde g(t)\|$, and for $k>0,$
$Q_k(u,t)=\| \tilde g^{(k)}(u) \|$. 
Consider the two cases:

Case I.  $(t,u) \cap D = \emptyset$: Say $t = a < u < b$,
where $a,b \in D$ and
$(a,b) $ is a maximal interval in $[0,1]\backslash D$.
So
\[
Q_k(u,t) = 
 \| g(b) - g(a)\|  \cdot
\left| \psi^{(k)}\left( \frac{u-a} {b - a} \right) \right| \cdot
\frac{1}{(b-a)^k } \ \ .
\]
Let $S_k$ be the largest value taken by the function $|\psi^{(k)}|$.
Consider:

Subcase I.1. $(b-a)^2 \le (u-a)$:  Here,
\[
Q_k(u,t) \le
 \|g(b) - g(a)\| \cdot S_k \cdot \frac{1}{(b-a)^{k } }\cdot
\frac{(u-a)^2}{(u-a)^2} \le  M_{k+4}S_k (u-a)^2 
 \ \ .
\]

Subcase I.2. $(b-a)^2 \ge (u-a)$:  In this case, 
use Taylor's Theorem and the assumption $\psi^{(n)}(0)=0$,
for all $n\in\NNN$, to bound
$|\psi^{(k)}(z)|$ by $\frac{S_{2k+4}}{(k+4)!}\,z^4$ .
Then,
\[
Q_k(u,t) \le 
M_0 \cdot \left| \psi^{(k)}\left( \frac{u-a} {b - a} \right) \right| \cdot 
\frac{(b-a)^{k+4}} {(u - a)^{k+4}} \cdot
\frac{(u-a)^{k+4}}{(b-a)^{2k+4}}  \le 
M_0 \cdot \frac{S_{2k+4}} {(k+4)!}\cdot (u-a)^2
   \  \ .
\]

Case II.  $(t,u) \cap D \ne \emptyset$:
Let $a = \sup(D \cap [t,u])$,
so $t < a < u$ and Case I applies to $a,u$. 
For (1), use the fact that $g$ is flat, together with
\[
\| \tilde g(u) -  \tilde g(t)\|  \le
\| \tilde g(u) -  \tilde g(a)\|  +
\| g(a) -  g(t)\|  \ \ .
\]
For (2), 
$\| \tilde g^{(k)}(u) \|   \le  B_k |u - a|^2 \le B_k |u-t|^2$.
\end{proof}

\begin{proofof}{Theorem \ref{thm-meet}}
Passing to a subset, and possibly translating it, let
$E = \{\vec x_j: j \in \omega\}$, where the $\vec x_j$
converge to $\vec 0$, and 
\begin{itemizz}
\item[a.] $\| \vec x_0 \| >   \| \vec x_1 \| >   \| \vec x_2 \| >   \cdots $.
\item[b.] $\| \vec x_j \| \le 2^{- j ^2 }$ for each $j$.
\end{itemizz}
Let $A$ be the set obtained by connecting
each $\vec x_j$ to $\vec x_{j+1}$ by a straight line segment;
so $A$ is a ``polygonal'' arc,
with $\omega$ steps. 
Moreover, the natural path which traverses it
from $\vec 0$ to $\vec x_0$ will be 1-1,
because (a) guarantees that the line segments forming $A$ meet
only at the $\vec x_j$.
Let $D = \{0\} \cup \{2^{-j}: j \in \omega\}$, and
define $g : D \to \RRR^n$ by $g(0) = \vec 0$ and
$g(2^{-j}) = \vec x_j$.  Then $g$ is flat, by (b)
(with $M_\alpha = 2^{1+\alpha + \alpha^2}$).  

Let $\psi\in C^\infty(\RRR)$ be such that 
\begin{itemizn}{43}
\item
$\psi(t) = 0$ when $t \le 0$ and $\psi(t) = 1$ when $t \ge 1$.
\item
$\psi'(t) > 0$ for $0 < t < 1$.
\item
$\psi^{(k)}(0)=\psi^{(k)}(1) = 0$ for $k \ge 1$.
\end{itemizn}
Such a $\psi$ may be obtained by
integrating a scalar multiple of the function described in 
Lemma \ref{lemma-cantor-zero}.
Let $\tilde g:  [0,1] \to \RRR^n$
be the $\psi$ interpolation for $g$.
Then, by Lemma \ref{lem-interpD}, $\tilde g\in C^\infty([0,1],\RRR^n)$.
\end{proofof}

For the path $\tilde g$ in the preceding proof,
all $\tilde g^{(k)}$ (for $k \ge 1$)
will be $\vec 0$ when passing through each $\vec x_j$, so that no
acceleration is felt when rounding a corner.  Also, each $\tilde g^{(k)}$
will be $\vec 0$ at $t = 0$.

Now consider the perfect set version. 

\begin{theorem}
If $E \subseteq \RRR^n$ is Borel and uncountable, then
$E$ meets some weakly $C^\infty$ arc in an uncountable set.
\end{theorem}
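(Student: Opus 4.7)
The plan is to adapt the construction of Theorem \ref{thm-meet}, replacing the countable convergent sequence $\{\vec x_j\}$ by an uncountable Cantor subset of $E$ parameterized over the middle-thirds Cantor set $D \subseteq [0,1]$. Since $E$ is Borel and uncountable, the classical perfect-set theorem (in ZFC) gives a Cantor set $P \subseteq E$. Within $P$, I would build a Cantor subscheme $\{P_s : s \in 2^{<\omega}\}$ of nonempty clopen-in-$P$ subsets, each contained in a closed ball $B_s$ of diameter at most $2^{-|s|^2}$, with $B_{s\cat 0}$ and $B_{s\cat 1}$ disjoint subsets of $B_s$. Put $Q = \bigcap_n \bigcup_{|s|=n} P_s$ and let $f : D \to Q$ be the canonical tree homeomorphism sending the level-$k$ clopen piece $D_s \subseteq D$ onto $Q \cap P_s$.

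The rapid shrinkage forces $f$ to be flat in the sense of Lemma \ref{lem-interpD}: if $u,t \in D$ first disagree at level $k+1$, then $|u-t| \ge 3^{-k-1}$ while $\|f(u)-f(t)\| \le 2^{-k^2}$, so $\|f(u)-f(t)\|/|u-t|^\alpha \le 2^{-k^2}\cdot 3^{\alpha(k+1)} \to 0$ for each $\alpha \in \NNN$. Choosing $\psi$ as in the proof of Theorem \ref{thm-meet}, Lemma \ref{lem-interpD} then produces a $\psi$-interpolation $\tilde f \in C^\infty([0,1],\RRR^n)$ extending $f$, with every derivative $\tilde f^{(k)}$ vanishing on $D$.

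The case $n=1$ is trivial (any uncountable $E \subseteq \RRR$ has uncountable intersection with some closed bounded interval, which is itself a weakly $C^\infty$ arc), so assume $n \ge 2$. The main obstacle is ensuring that $\tilde f$ is injective, so that its image is an arc rather than a self-intersecting curve. Each maximal gap $(a,b) \subseteq [0,1]\setminus D$ corresponds to a unique node $s \in 2^{<\omega}$ (the gap between $D_{s\cat 0}$ and $D_{s\cat 1}$), and $\tilde f \res (a,b)$ is a strictly monotone reparameterization of the straight segment $\sigma_s$ from $f(a) \in P_{s\cat 0}$ to $f(b) \in P_{s\cat 1}$; the issue is to guarantee that the segments $\sigma_s$ are pairwise disjoint and meet $Q$ only at their endpoints.

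To achieve this I would strengthen the inductive choice of the balls $B_s$ so that they realize a non-crossing planar embedding of the binary tree in $\RRR^n$: at stage $s$, pick $B_{s\cat 0}$ and $B_{s\cat 1}$ at the tips of two disjoint thin ``wedges'' inside $B_s$ sharing a small vertex neighborhood containing the designated connecting points $f(a_s) \in P_{s\cat 0}$ and $f(b_s) \in P_{s\cat 1}$, and confine all deeper subscheme structure under $s\cat i$ to the interior of the corresponding wedge. Because $P$ is nowhere dense in $\RRR^n$ and every clopen-in-$P$ piece is again an uncountable Cantor set, one has enough freedom at each stage to pick such $P_s \subseteq P$ and position the surrounding balls to avoid the finitely many previously-placed segments. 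Granted such a scheme, the non-crossing wedge geometry immediately implies $\tilde f$ is injective, so its image is a weakly $C^\infty$ arc containing the uncountable set $Q \subseteq E$. The geometric verification in this last step is the hard part; the smoothness and flatness arguments are direct transcriptions of the ones in Theorem \ref{thm-meet}.
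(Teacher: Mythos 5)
Your analytic core --- pass to a Cantor subset of $E$, index it by a scheme whose pieces shrink like $2^{-|s|^2}$ so that the induced parameterization $f:D\to Q$ over the middle-thirds Cantor set is flat, and then invoke Lemma \ref{lem-interpD} with the same $\psi$ to get $\tilde f\in C^\infty([0,1],\RRR^n)$ --- is exactly the paper's argument, and that part of your proposal is correct. The divergence, and the gap, is in the injectivity step, which you correctly identify as the crux but do not actually carry out. Your ``non-crossing wedge'' scheme has a circularity you don't resolve: the segment $\sigma_s$ joins $f(a_s)$ and $f(b_s)$, and these are points of $Q$ determined only by the \emph{completed infinite} scheme below $s\cat 0$ and $s\cat 1$ (each is the intersection of a nested sequence of pieces chosen at all later stages), so at stage $s$ you cannot yet ``avoid the finitely many previously-placed segments'' --- those segments do not exist yet, only neighborhoods that will eventually contain them. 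You would additionally have to guarantee that $\sigma_s$ meets $Q$ only at its two endpoints (other points of $Q\cap P_{s\cat 0}$ must be kept off the segment), and that $\sigma_s$, which necessarily enters both wedges to reach its endpoints, avoids all the deeper segments confined to those same wedges. None of this is impossible, but it is a genuinely delicate infinite-stage geometric bookkeeping problem, and as written the proposal asserts rather than proves that it can be done.

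The paper avoids all of this with one device: before building the scheme, shrink and rotate $E$ so that the projection $\pi^1$ onto the first coordinate is injective on $E$ (this is available in ZFC for Borel sets via the perfect $\varepsilon$--directed subset of Lemma \ref{lemma-dir}, as in the proof of Theorem \ref{thm-c1}). Then choose the scheme $\{F_\sigma\}$ so that lexicographic order on indices agrees with the order of the $\pi^1$-images. Consequently $\pi^1\circ g$ is order-preserving on $D$, and since on each gap $(a,b)$ the interpolant moves monotonically along the segment from $g(a)$ to $g(b)$ (as $\psi$ is increasing), the first coordinate of $\tilde g$ is strictly increasing on all of $[0,1]$; injectivity is immediate and no disjointness of segments in $\RRR^n$ ever needs to be checked. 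I recommend you replace your wedge construction with this monotone-coordinate argument; alternatively, if you want to keep your construction, you must spell out how the connecting points $f(a_s),f(b_s)$ are pinned down (e.g.\ by designating the extreme sub-branches at each stage) and verify the disjointness claims in detail.
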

\begin{proof}
Write elements of $\RRR^n$ as $\vec x = (x^1, \ldots, x^n)$.
By shrinking and rotating $E$, we may assume that $E$ is a Cantor set
and the projection $\pi^1$ of $E$ on the $x^1$ coordinate is 1-1.
Shrinking $E$ further, we may assume that 
$E = \bigcap_j ( \bigcup \{F_\sigma : \sigma \in \{0,2\}^j  \} )$,
where the $F_\sigma$ are compact and form a tree and each
$\diam(F_\sigma) \le 3^{- (\lh(\sigma))^2}$.

In $\RRR$, the ``$t$--axis'', let $D$ be the usual middle-third Cantor set.
Then $D = \bigcap_j ( \bigcup \{I_\sigma : \sigma \in \{0,2\}^j  \} )$, where 
$I_\sigma$ is an interval of length $3^{- \lh(\sigma)}$.
Let $g: D \onto E$ be the natural homeomorphism. 
So, if $\alpha \in  \{0,2\}^\omega$,
it determines the point
$t_\alpha = \sum_{i \in \omega} (\alpha_i 3^{-i}) \in D$.
Then $\bigcap_{i \in \omega} I_{\alpha \res i} = \{t_\alpha\}$ and
$\bigcap_{i \in \omega} F_{\alpha \res i} = \{g(t_\alpha)\}$.

Note that $g$ is flat.  Let $\psi\in C^\infty(\RRR)$
be as in the proof of Theorem \ref{thm-meet}, and let $\tilde g$
be the $\psi$ interpolation for $g$.  
Then $\tilde g \in C^\infty([0,1],\RRR^n)$.

Finally, in choosing $E$ and the $F_\sigma$, make sure that
if $\sigma < \tau$ lexicographically, then all elements
of $\pi^1(F_\sigma)$ are less than all elements of $\pi^1(F_\tau)$.
This will guarantee that $\pi^1 \circ g: D \to \RRR$ is
order-preserving, so that $\tilde g$ is a 1-1 function.
\end{proof}

Under $\MA(\aleph_1)$, if
$E \subseteq \RRR^n$ has size $\aleph_1$, then $E$ can be covered
by $\aleph_0$ weakly $C^\infty$ arcs.
In particular, $E$ can be covered by
$\aleph_0$ copies, or rotated copies, of the perfect set $g(D)$
constructed in the preceding proof.

\renewcommand\refname{\small  References}

{
\footnotesize

}

\end{document}